\documentclass[11pt, twoside, leqno]{article}
\usepackage{amsmath,amsthm}
\usepackage{amssymb,latexsym}
\usepackage{enumerate}

\pagestyle{myheadings}
\markboth{A. Katanaga, A. N\'emethi and A. Sz\H{u}cs}{Links of singularities up to regular homotopy}

\newtheorem{theorem}{Theorem}
\newtheorem{lemma}{Lemma}

\newtheorem*{pro}{Proposition}
\newtheorem*{cor}{Corrollary}
\newtheorem*{lem}{Lemma}
\newtheorem*{lemA}{Lemma A}
\newtheorem*{lemB}{Lemma B}
\newtheorem*{sublem}{Sublemma}


\theoremstyle{definition}

\newtheorem{rema}{Remark}


\newtheorem*{rem}{Remark}

\newtheorem*{dfn}{Definition}


\numberwithin{equation}{section}


\allowdisplaybreaks

\def\reg{\text{\rm reg}\,}
\def\im{\text{\rm im}\,}
\def\FrImm{\text{\rm Fr-Imm}\,}


\begin{document}

\title{Links of singularities up to regular homotopy}

\author{by\\
Atsuko Katanaga\\
Andr\'as N\'emethi and\\
Andr\'as Sz\H{u}cs}

\date{}

\maketitle

\renewcommand{\thefootnote}{}


\renewcommand{\thefootnote}{\arabic{footnote}}
\setcounter{footnote}{0}


\begin{abstract}
The abstract link $L_d$ of the complex isolated singularity $x^2 + y^2 + z^2 + v^{2d} = 0$ in
$(\mathbb C^4, 0)$ is diffeomorphic to $S^3 \times S^2$. 
We classify the embedded links of these singularities 
up to regular homotopies precomposed with diffeomorphisms of $S^3 \times S^2$.
Let us denote  by $i_d$ the inclusion $L_d \subset S^7$.
We show that for arbitrary diffeomorphisms $\varphi_d :\ S^3 \times S^2
\longrightarrow  L_d$ the compositions $i_d \circ \varphi_d$ are image regularly homotopic for two 
values  $d_1$ and $d_2$ of $d$ if and only if $d_1 \equiv d_2 \mod 2$.
\end{abstract}

\section{Introduction}
\label{sec:1}

It is well-known that the infinite number of Brieskorn equations in $\mathbb C^5$
\[
z_1^{6k - 1} + z_2^3 + z_3^2 + z_4^2 + z_5^2 = 0,
\quad (\text{intersected with } S^9 = \bigl\{\Sigma|z_i|^2 = 1\bigr\})
\]
describe the finite number of homotopy spheres.
Why do we have infinitely many equations for a finite number of homotopy spheres?
The answer was given in \cite{E-Sz}:
These equations give all the embeddings of these homotopy spheres in $S^9$ up to regular homotopy.

The present paper grew out from an attempt to investigate the analogous question for the equations
\[
x^2 + y^2 + z^2 + v^k = 0.
\tag{$*$}
\]
It was proved in \cite{K-N} that the links of the singularities $(*)$ are $S^5$ or $S^3 \times S^2 $ depending on the parity of~$k$.
Again we have infinite number of equations for both diffeomorphism types of links.
So it seems natural to pose the analogous

\noindent
{\bf Question:}
What are the differences between the links for different values of $k$ of the same parity?
Do they represent different immersions up to regular homotopy?

For $k$ odd, when the link is $S^5$, the question about the regular homotopy turns out to be trivial, since any two immersions of $S^5$ to $S^7$ are regularly homotopic.
(By Smale's result, see \cite{S1}, the set of regular homotopy classes of immersions $S^5 \longrightarrow  S^7$ can be identified with $\pi_5 (SO_7)$.
The later group is trivial by Bott's result \cite{B}.)

The situation is quite different for $k$ even.
Put $k = 2d$ and let us denote by $L_d$ the link of $(*)$ for $k = 2d$ and by $i_d$ the inclusion $L_d \hookrightarrow S^7$.
In this case the question on regular homotopy classes of $i_d$ turns out to be not well-posed.

It is true that $L_d$ is diffeomorphic to $S^3 \times S^2 $ for any $d$, but the question about the regular homotopy makes sense only after having given a concrete diffeomorphism $\varphi_d :\ S^3 \times S^2  \longrightarrow  L_d$, and only then we can ask about the regular homotopy classes
\[
i_d \circ \varphi_d :\ S^3 \times S^2  \longrightarrow  S^7.
\]
(In the case of Brieskorn equations precomposing an immersion $f:\ \Sigma^7 \longrightarrow  S^9$ with an orientation preserving self-diffeomorphism of the homotopy sphere $\Sigma^7$ does not change the regular homotopy class of the immersion~$f$.
This is not so for the manifold $S^3 \times S^2 $.)

\begin{dfn}[see \cite{P}]
Given manifolds $M$, $N$, and two immersions $f_0$ and $f_1$ from $M$ to $N$,
we say that $f_0$ and $f_1$ are {\it image-regular homotopic} if there is a self-diffeomorphism $\varphi$ of $M$ such that $f_1$ is regularly homotopic to $f_0 \circ \varphi$.
\end{dfn}

\noindent
{\bf Notation:}

1) $I(M, N)$ will denote the image-regular homotopy classes of immersions of $M$ to $N$.
The image regular homotopy class of an immersion $f$ will be denoted by $\im[f]$.

2) Recall that an immersion is called framed if its normal bundle is trivialized.
$\FrImm (M, N)$ will denote the framed regular homotopy classes of framed immersions of $M$ to~$N$.

In the case when the immersion $f$ is framed $\reg[f]$ will denote its framed regular homotopy class.

\begin{rem}
Note that for the inclusions $i_d:L_d \subset S^7$ their regular homotopy classes $\reg[i_d]$ are not well-defined, but their image regular homotopy classes  $\im[i_d]$ are well-defined.
\end{rem}

\section*{Formulation of the results}

\begin{theorem}
\label{th:1}
For any simply connected, stably parallelizable, $5$-dimensional manifold $M^5$ the framed regular homotopy classes of framed immersions in $S^7$ can be identified with $H^3(M; \mathbb Z)$, i.e.
\[
\FrImm (M^5, S^7) = H^3(M; \mathbb Z).
\]
\end{theorem}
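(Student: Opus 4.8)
The plan is to translate the statement into homotopy theory via the Smale--Hirsch immersion theorem and then reduce it to the computation of $[M,SO(7)]$.

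First I would note that since $M$ is compact with $\dim M = 5 < 7$, every immersion of $M$, and every regular homotopy (an immersion of the $6$-dimensional $M\times[0,1]$), misses a point of $S^7$ by general position; hence $\FrImm(M,S^7)=\FrImm(M,\mathbb R^7)$ and I may work in $\mathbb R^7$. By the Smale--Hirsch theorem (the $h$-principle for immersions in positive codimension) the space of immersions of $M$ into $\mathbb R^7$ is weakly homotopy equivalent to the space of vector-bundle monomorphisms $TM\to\varepsilon^7$, where $\varepsilon^k=M\times\mathbb R^k$ denotes the trivial bundle. Equipping an immersion with a normal framing amounts to trivializing its rank-$2$ normal (cokernel) bundle, so a framed immersion corresponds exactly to a bundle isomorphism $TM\oplus\varepsilon^2\xrightarrow{\ \cong\ }\varepsilon^7$, and framed regular homotopy corresponds to homotopy through such isomorphisms. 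Thus $\FrImm(M,S^7)$ is the set of homotopy classes of bundle isomorphisms $TM\oplus\varepsilon^2\to\varepsilon^7$.

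Next I would trivialize the source. Because $M$ is stably parallelizable, $TM\oplus\varepsilon^2$ is stably trivial; since its rank $7$ exceeds $\dim M=5$, and a stably trivial bundle of rank larger than the dimension of the base is already trivial, there is an isomorphism $\alpha_0:TM\oplus\varepsilon^2\xrightarrow{\cong}\varepsilon^7$. Composing an arbitrary isomorphism $\alpha$ with $\alpha_0^{-1}$ yields an automorphism of $\varepsilon^7$, i.e. a map $M\to GL_7(\mathbb R)$; using the orientations of $M$ and of $\mathbb R^7$ to fix the relevant component, and deformation-retracting $GL_7^{+}(\mathbb R)$ onto $SO(7)$, identifies homotopy classes of such $\alpha$ with $[M,SO(7)]$. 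Hence $\FrImm(M,S^7)\cong[M,SO(7)]$.

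Finally I would compute $[M,SO(7)]$. In the stable range one reads off from Bott periodicity $\pi_1(SO(7))=\mathbb Z/2$, $\pi_2(SO(7))=0$, $\pi_3(SO(7))=\mathbb Z$ and $\pi_4(SO(7))=\pi_5(SO(7))=0$. As $M$ is simply connected, the $\pi_1$-term is invisible: every map $M\to SO(7)$ lifts, uniquely up to homotopy, to the universal cover $\mathrm{Spin}(7)$, whose only nonzero homotopy group in degrees $\le 5$ is $\pi_3=\mathbb Z$. The canonical map $\mathrm{Spin}(7)\to K(\mathbb Z,3)$ classifying a generator of $H^3$ is then an isomorphism on $\pi_i$ for all $i\le 5$; since $K(\mathbb Z,3)$ has vanishing homotopy in degree $6$, it induces a bijection $[M,\mathrm{Spin}(7)]\xrightarrow{\cong}[M,K(\mathbb Z,3)]=H^3(M;\mathbb Z)$ for the $5$-dimensional complex $M$. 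Combining the three steps gives $\FrImm(M^5,S^7)=H^3(M;\mathbb Z)$. I expect the main obstacle to be the bookkeeping in the second step: verifying that the Smale--Hirsch correspondence really matches framed regular homotopy with homotopy of bundle isomorphisms, and pinning down the orientation convention that singles out $SO(7)$ rather than all of $O(7)$, so that one copy of $H^3$ rather than two is obtained. The destabilization $TM\oplus\varepsilon^2\cong\varepsilon^7$ and the homotopy computation are then routine, the latter relying only on the vanishing of $\pi_2,\pi_4,\pi_5$ of $SO(7)$, which rules out secondary obstructions and extension problems below dimension $6$.
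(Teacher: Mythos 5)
Your proof is correct, and it coincides with the paper's only up to the first reduction: the paper also passes from $S^7$ to $\mathbb R^7$ and invokes the Smale--Hirsch bijection $\reg[f]\mapsto [df]$ to identify $\FrImm(M^5,\mathbb R^7)$ with $[M,SO_7]$ (with the same tacit orientation convention that you explicitly flag in order to land in $SO(7)$ rather than $O(7)$ --- indeed you treat that point more carefully than the paper, which simply asserts that $df$ takes values in $SO_q$). From there the two computations of $[M,SO(7)]$ diverge genuinely. The paper chooses a cell decomposition of $M$ with a single $0$-cell, a single $5$-cell, and no $1$- or $4$-cells (a fact that rests on handle-trading for simply connected manifolds, not on mere simple connectivity of an arbitrary CW structure), punctures $M$, and runs two Puppe sequences; the heart of its argument is the Lemma, proved in the Appendix, identifying the connecting map $\alpha:\bigl[S(sk_2\, M),SO\bigr]\to\bigl[\vee S^3,SO\bigr]$ with the coboundary $\delta: C^2(M;\mathbb Z)\to C^3(M;\mathbb Z)$, whence $\bigl[\overset{\circ}{M},SO\bigr]=\operatorname{coker}\delta=H^3(M;\mathbb Z)$. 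You instead lift through $\mathrm{Spin}(7)$ (legitimate: $M$ is simply connected, and since $SO(7)$ is connected and an H-space, lifts exist, are unique up to homotopy, and free classes agree with based ones) and compare $\mathrm{Spin}(7)$ with $K(\mathbb Z,3)$: as $\pi_i(\mathrm{Spin}(7))$ vanishes for $i\le 5$, $i\ne 3$, the classifying map of a generator of $H^3(\mathrm{Spin}(7);\mathbb Z)$ is $6$-connected, so $[M,\mathrm{Spin}(7)]\cong H^3(M;\mathbb Z)$ for the $5$-dimensional complex $M$. Your route buys brevity and robustness --- no special cell structure is needed and the paper's Appendix lemma is avoided entirely --- while the paper's route is more elementary (no Eilenberg--MacLane spaces or connected covers) and produces the identification explicitly at the cochain level, which meshes with its later argument that classes are detected by restriction to $S^3\times N$.
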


\begin{cor}
In particular,
\[
\FrImm(S^3 \times S^2, S^7) = \mathbb Z.
\]
\end{cor}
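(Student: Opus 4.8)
The plan is to obtain the corollary as a direct application of Theorem~\ref{th:1} to the particular manifold $M = S^3 \times S^2$. For this I must check that $S^3 \times S^2$ satisfies the three standing hypotheses of the theorem and then identify the relevant cohomology group. Once the hypotheses are verified, the only remaining computation is that of $H^3(S^3 \times S^2; \mathbb Z)$.

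First I would verify the hypotheses. The dimension is $3 + 2 = 5$, as required. Simple connectivity follows since $\pi_1(S^3 \times S^2) \cong \pi_1(S^3) \times \pi_1(S^2) = 0$, both factors being simply connected (indeed $2$-connected and $1$-connected respectively). For stable parallelizability I would use that each sphere is stably parallelizable, and that the tangent bundle of a product splits as $T(S^3 \times S^2) \cong \pi_1^* TS^3 \oplus \pi_2^* TS^2$, where $\pi_1, \pi_2$ are the projections; a Whitney sum of stably trivial bundles is stably trivial, so $S^3 \times S^2$ is stably parallelizable.

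Next I would compute the cohomology. By the K\"unneth theorem, and since $H^*(S^3; \mathbb Z)$ and $H^*(S^2; \mathbb Z)$ are free, there are no $\operatorname{Tor}$ contributions, and one has
\[
H^3(S^3 \times S^2; \mathbb Z) \cong \bigoplus_{i+j=3} H^i(S^3;\mathbb Z) \otimes H^j(S^2;\mathbb Z).
\]
The cohomology of $S^3$ is $\mathbb Z$ in degrees $0$ and $3$ and zero otherwise, while that of $S^2$ is $\mathbb Z$ in degrees $0$ and $2$ and zero otherwise. Inspecting the summands $(i,j) \in \{(0,3),(1,2),(2,1),(3,0)\}$, only the term $(3,0)$ survives, giving $H^3(S^3\times S^2;\mathbb Z) \cong \mathbb Z \otimes \mathbb Z \cong \mathbb Z$.

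Combining these facts with Theorem~\ref{th:1} yields $\FrImm(S^3 \times S^2, S^7) = H^3(S^3 \times S^2; \mathbb Z) = \mathbb Z$, as claimed. I do not expect any genuine obstacle here: the content lies entirely in Theorem~\ref{th:1}, and the corollary is a matter of confirming that $S^3 \times S^2$ lies in its scope and of reading off the correct cohomology. The only step requiring a moment's care is the stable parallelizability of the product, but this is standard once the tangent bundle splitting is invoked.
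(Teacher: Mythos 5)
Your proof is correct and matches the paper's (implicit) argument: the corollary is simply Theorem~\ref{th:1} applied to $M = S^3 \times S^2$, with $H^3(S^3 \times S^2;\mathbb Z) \cong \mathbb Z$ by K\"unneth, and your verification of simple connectivity and stable parallelizability (the paper later constructs an explicit stable parallelization via the quaternionic trivialization of $TS^3$) fills in exactly the routine checks the paper leaves to the reader.
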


\begin{theorem}
\label{th:2}
The set $I(S^3 \times S^2)$ of image-regular homotopy classes of framed immersions $S^3 \times S^2 \longrightarrow  S^7$ can be identified with $\mathbb Z_2$.
\end{theorem}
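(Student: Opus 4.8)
The plan is to realize $I(S^3\times S^2)$ as the quotient of $\FrImm(S^3\times S^2,S^7)=H^3(S^3\times S^2;\mathbb Z)=\mathbb Z$ (Theorem~\ref{th:1} and its Corollary) by the precomposition action of the diffeomorphism group, and then to compute that quotient. First I would pin down how precomposition with $\varphi\in\mathrm{Diff}(S^3\times S^2)$ moves the framed regular homotopy invariant. Writing $M=S^3\times S^2$ and using that $M$ is stably parallelizable with $TM\oplus\varepsilon^2\cong\varepsilon^7$, the identification of Theorem~\ref{th:1} sends a framed immersion $f$ to the homotopy class of the map $g_f\colon M\to SO_7$ comparing the frame $df\oplus(\text{normal framing})$ with a fixed reference trivialization of $TM\oplus\varepsilon^2$. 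A direct bundle computation then gives $g_{f\circ\varphi}=(g_f\circ\varphi)\cdot D_\varphi$, where $D_\varphi\colon M\to SO_7$ represents $d\varphi\oplus\mathrm{id}_{\varepsilon^2}$ in the reference trivialization and the product is pointwise in $SO_7$. Passing to primary obstructions in $H^3(M;\pi_3SO_7)=\mathbb Z$ yields the affine action
\[
\reg[f\circ\varphi]=\varphi^{*}\reg[f]+[D_\varphi],\qquad \varphi^{*}=\pm1 .
\]
Thus $I(M)=\mathbb Z\big/\langle\text{translations }[D_\varphi]\rangle$, and since $\varphi^{*}=\pm1$ preserves parity it will act trivially on the eventual quotient.

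Next I would produce the translations. The decisive diffeomorphisms are the twists $\Phi_\alpha(x,y)=(x,\alpha(x)\cdot y)$ indexed by $\alpha\in\pi_3(SO_3)=\mathbb Z$ (the dual twists, indexed by $\pi_2(SO_4)=0$, are trivial). Splitting the reference trivialization as $\varepsilon^7\cong\varepsilon^3_{TS^3}\oplus(TS^2\oplus\varepsilon^1)\oplus\varepsilon^1$ and carefully tracking the moving basepoint in the reference isomorphism, one checks that $D_{\Phi_\alpha}$ is homotopic to $S^3\xrightarrow{\ \alpha\ }SO_3\hookrightarrow SO_7$ acting on the middle $\mathbb R^3$-summand and by the identity elsewhere; in particular it factors through the projection $M\to S^3$. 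Hence $[D_{\Phi_\alpha}]\in H^3(M;\mathbb Z)=\mathbb Z$ is the image of $[\alpha]$ under $\pi_3(SO_3)\to\pi_3(SO_7)=\pi_3(SO)=\mathbb Z$. The key arithmetic input is that this stabilization is multiplication by $2$: by Milnor's clutching formulas the generator of $\pi_3(SO_3)$ (the conjugation map $q\mapsto(w\mapsto q\,w\,\bar q)$, i.e. $h=1,\ j=-1$) gives a bundle over $S^4$ with $p_1=\pm4$, whereas the generator of $\pi_3(SO)$ has $p_1=\pm2$. Therefore the twists realize exactly the subgroup $2\mathbb Z$, so $I(M)$ is a quotient of $\mathbb Z/2\mathbb Z$.

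It remains --- and this is the main obstacle --- to show that no diffeomorphism produces an odd translation, i.e. that the full translation subgroup is exactly $2\mathbb Z$ rather than all of $\mathbb Z$ (an odd translation would collapse $I(M)$ to a point). Here I would observe that $t(\varphi):=[D_\varphi]$ is a crossed homomorphism: from $g_{\psi\circ\varphi}=(g\circ\psi\circ\varphi)\,(D_\psi\circ\varphi)\,D_\varphi$ one gets $t(\psi\varphi)=\varphi^{*}t(\psi)+t(\varphi)$, and $t$ vanishes on the identity component of $\mathrm{Diff}(M)$, since an isotopy $\varphi_s$ induces a homotopy of the maps $D_{\varphi_s}$. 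Consequently $t$ factors through the mapping class group $\pi_0\mathrm{Diff}(S^3\times S^2)$, and it suffices to evaluate it on a generating set. The twists contribute $2\mathbb Z$ by the previous step; the remaining generators are the orientation reversals of the two factors, whose derivatives are, up to homotopy, constant along $S^3\times\{\mathrm{pt}\}$, so they contribute $t=0$ and only realize the linear part $\varphi^{*}=\pm1$. I expect the delicate point to be the identification of a complete generating set of $\pi_0\mathrm{Diff}(S^3\times S^2)$ together with the verification that every generator yields an even $t$; granting this, the translation subgroup is precisely $2\mathbb Z$, the $\pm1$ linear part acts trivially modulo $2$, and we conclude $I(S^3\times S^2)=\mathbb Z/2\mathbb Z=\mathbb Z_2$.
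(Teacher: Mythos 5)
Your first two steps are sound and essentially coincide with the paper's: the identification of $I(S^3\times S^2)$ with the quotient of $\FrImm(S^3\times S^2,S^7)=\mathbb Z$ by the affine precomposition action, and the computation that the twists $\Phi_\alpha(x,y)=(x,\alpha(x)y)$ translate by exactly $2\mathbb Z$ are the paper's Lemma~2 and Sublemma; your derivation of the multiplication-by-$2$ fact for $\pi_3(SO_3)\to\pi_3(SO)$ via clutching functions and $p_1$ (generator of $\pi_3(SO_3)$ gives $p_1=\pm4$, stable generator gives $p_1=\pm2$) is a correct alternative to the paper's Lemma~1, which instead uses the fibration $SO_5\to V_2(\mathbb R^5)$ and $\pi_3(V_2(\mathbb R^5))=\mathbb Z_2$. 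But at the step you yourself identify as the main obstacle --- showing that no diffeomorphism produces an odd translation, so that $I(M)$ is genuinely $\mathbb Z_2$ and not a point --- your argument is not a proof: it defers to ``a complete generating set of $\pi_0\mathrm{Diff}(S^3\times S^2)$'' which you neither exhibit nor cite, and your suggested list (twists plus the two factor reversals) is not justified. This is a substantial external input, not a routine check: note that by your own computation the twists act nontrivially on $\FrImm$, so they have infinite order in the mapping class group, which is therefore an infinite and nontrivial object; nothing in your write-up rules out a generator with odd $t$.

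The paper closes exactly this gap without any knowledge of $\pi_0\mathrm{Diff}(S^3\times S^2)$, and this is the idea your proposal is missing. Since $[M,SO]\to[S^3\times N,SO]$ is an isomorphism (the paper's Remark~1), the class $\reg[f\circ\varphi]$ is detected by $d(f\circ\varphi)\bigr|_{S^3\times N}$, hence sees only the restriction of $\varphi$ to $S^3\times N$. Lemma~B sharpens this to: $\reg[f\circ\varphi]$ depends only on the \emph{homotopy class} $\bigl[\varphi|_{S^3\times N}\bigr]\in\pi_3(M)$ --- one stabilizes to $M\times D^q$, where homotopic embeddings of $S^3$ become framed isotopic, applies Thom's isotopy lemma, and descends via Smale--Hirsch/compression. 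Lemma~A then shows, by a Hopf-invariant computation (the composition $p\circ\alpha_m|_{S^3\times N}$ has Hopf invariant $m$, computed from linking of preimage great circles under the double cover $S^3\to SO_3$), that for every positive diffeomorphism $\varphi$ this homotopy class is realized by some twist $\alpha_m$. Hence every positive diffeomorphism acts on $\FrImm$ exactly like a twist, the translation subgroup is precisely $2\mathbb Z$, and the non-positive ones are absorbed as in your $\varphi^*=\pm1$ bookkeeping (the difference $-2\reg[f]+2m$ is still even). In short: the paper replaces your classification-of-diffeomorphisms step by the observation that only the class of $\varphi(S^3\times N)$ in $\pi_3(M)$ matters; to repair your proof along your own lines you would instead need a proved generating set for $\pi_0\mathrm{Diff}(S^3\times S^2)$ and an evaluation of $t$ on every generator.
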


\begin{theorem}
\label{th:3}
The inclusions $i_d :\ L_d \hookrightarrow S^7$ for $d = d_1$ and $d_2$
represent the same element in $I(S^3\times S^2, S^7) = \mathbb Z_2$ (i.e.\ $\im[i_{d_1}] = \im[i_{d_2}]$) if and only if $d_1 \equiv d_2 \mod 2$.
\end{theorem}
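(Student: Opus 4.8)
The plan is to deduce everything from Theorems~\ref{th:1} and~\ref{th:2} by reducing the statement to a single parity computation. First I would observe that each inclusion $i_d$ is canonically a framed immersion: since $L_d=\{f=0\}\cap S^7$ with $f=x^2+y^2+z^2+v^{2d}$ and $0$ is a regular value of $f|_{S^7}$ away from the origin, the pair $(\operatorname{Re}f,\operatorname{Im}f)$ trivialises the (complex-line) normal bundle of $L_d$ in $S^7$. This is the Milnor framing, and it makes $\reg[i_d\circ\varphi_d]$ a well-defined element of $\FrImm(S^3\times S^2,S^7)=\mathbb Z$ once a diffeomorphism $\varphi_d$ is fixed. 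Because the ambiguity coming from $\varphi_d$ is exactly the self-diffeomorphism action that Theorem~\ref{th:2} collapses, the class $\im[i_d]\in I(S^3\times S^2,S^7)=\mathbb Z_2$ is the reduction of this integer under that action; so the whole theorem reduces to computing the integer invariant $n(d):=\reg[i_d\circ\varphi_d]$ and showing that its image in $\mathbb Z_2$ depends on $d$ precisely through $d\bmod 2$.

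For the computation I would make the identification of Theorem~\ref{th:1} explicit. The group $\mathbb Z=H^3(S^3\times S^2;\mathbb Z)$ arises as the primary obstruction measuring the difference of two framings of the trivial bundle $\underline{\mathbb R^7}|_{L_d}$, with coefficient group $\pi_3(SO(7))=\mathbb Z$; concretely it is read off by restricting the comparison map $L_d\to SO(7)$ to the core $S^3\times\{pt\}$ and taking its class in $\pi_3(SO(7))$, which is detected by $\tfrac12 p_1$. Thus $n(d)$ is, over this $S^3$, the difference between the Milnor framing and the product framing supplied by $\varphi_d$. I expect this to be computable from the bounding geometry: $L_d$ is the boundary of the parallelizable Milnor fibre $F_d\subset D^8$, which is homotopy equivalent to a wedge of $\mu=2d-1$ copies of $S^3$, and $n(d)$ should equal a relative characteristic number of $F_d$ (a bounded-$p_1$ / framing defect of the stable framing induced by the complex structure on $\mathbb C^4$). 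The natural candidate, e.g.\ $\chi(F_d)/2=1-d$ or an equivalent $p_1$-type count, already has the decisive feature that its parity flips as $d\mapsto d+1$.

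The heart of the argument, and the step I expect to be the main obstacle, is therefore this explicit framing computation: correctly accounting for the contribution of the Milnor (normal) framing relative to the tangential data coming from the complex structure, and pinning down $n(d)$ well enough to establish that $n(d+1)-n(d)$ is odd. Granting this, the slope of $d\mapsto n(d)\bmod 2$ is $1$, so $d\mapsto\im[i_d]$ is (after the fixed identification $I=\mathbb Z_2$) the parity map; both values are realised and the value depends only on $d\bmod 2$. The two directions of Theorem~\ref{th:3} then follow at once: if $d_1\equiv d_2\pmod 2$ then $\im[i_{d_1}]=\im[i_{d_2}]$ by the definition of $I$, and conversely equal image-regular homotopy classes force $n(d_1)\equiv n(d_2)\pmod 2$, hence $d_1\equiv d_2\pmod 2$. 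A secondary point to verify carefully is that the self-diffeomorphism action on $\FrImm=\mathbb Z$ is indeed reduction modulo $2$ (the content of Theorem~\ref{th:2}), so that the $\varphi_d$-dependence of $n(d)$ genuinely disappears in $\mathbb Z_2$ and does not interfere with the parity conclusion.
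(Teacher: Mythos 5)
Your reduction framework is sound and matches the paper's: fix the framing coming from the defining equation, use Theorem~\ref{th:1} to get a well-defined integer $\reg[i_d\circ\varphi_d]\in\mathbb Z$ once $\varphi_d$ is chosen, and use Theorem~\ref{th:2} (the self-diffeomorphism action being exactly $2\mathbb Z$) to get a well-defined class in $\mathbb Z_2$, so that everything hinges on showing $n(d)\bmod 2$ is an affine function of $d$ with odd slope. But the proof has a genuine gap precisely at the point you yourself flag as ``the main obstacle'': the computation of $n(d)$ is never performed, only conjectured. Your proposed route --- identifying $n(d)$ with a relative characteristic number of the Milnor fibre $F_d\simeq\vee^{2d-1}S^3$, with $\chi(F_d)/2=1-d$ as ``the natural candidate'' --- is unsubstantiated: no theorem is cited or proved identifying the Smale-type invariant of a framed immersion of $S^3\times S^2$ in $S^7$ with a bounded-$p_1$ or Euler-characteristic count of a Seifert manifold. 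Formulas of this kind exist for homotopy spheres (the Ekholm--Sz\H{u}cs computation quoted in the introduction), but carrying one over to $S^3\times S^2$ would itself require essentially the framing analysis you are trying to avoid, and the ambiguity of the bounding framing under the nontrivial diffeomorphism group of $S^3\times S^2$ would have to be controlled all over again. ``Granting this, the slope is $1$'' is exactly the theorem's content; nothing in the proposal grants it.

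There is also a prior issue your plan would have to confront: to compare $n(d_1)$ and $n(d_2)$ at all, one needs diffeomorphisms $\varphi_d:S^3\times S^2\to L_d$ constructed uniformly enough in $d$ that the comparison is meaningful, and this is where the paper invests its real work. It exhibits an explicit diffeomorphism $\Psi$ of $E_0(T\mathbb{CP}^1\oplus\varepsilon^1_C)\cong S^3\times S^2\times\mathbb R$ onto $X_d\setminus 0$ in two charts; with this parametrization the underlying immersion becomes simply the inclusion $\mathbb C^2\setminus 0\subset\mathbb C^4$, and all the $d$-dependence is pushed into the framing, given by the explicit vectors $\partial\Psi/\partial t|_{t=0}=(0,v^d,x,0)$ and $\operatorname{grad}g=(0,x,-v^d,0)$. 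The resulting framing map $S^3\to SO_4$ visibly factors through the degree-$d$ branched cover $(x,v)\mapsto(x,v^d)$ followed by a fixed map of the form $(1,*)\in\pi_3(S^3)\oplus\pi_3(SO_3)$, giving the class $(d,?)$ and hence $d\bmod 2$ in $\pi_3(SO)/j_{4*}\pi_3(SO_3)=\mathbb Z_2$ by Remark~\ref{rema:2}. (Incidentally, your candidate $\chi(F_d)/2=1-d$ has parity opposite to the paper's value $d$; this does not affect the iff-statement, but it illustrates that the heuristic cannot be trusted to pin down the invariant.) Until the explicit framing computation, or a proved substitute for it, is supplied, the proposal establishes only the easy direction that $d_1\equiv d_2\bmod 2$ would imply equal classes \emph{if} the parity formula held, and the theorem remains unproved.
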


\begin{rem}
The identifications in the above
Theorems
 arise only after we have fixed a
parallelization of the manifolds (or a stable parallelization). (Different
parallelizations provide different identifications.
For the Corollary these identifications differ by an affine shift
$x \mapsto x+a$, where  $a \in \pi_3(SO)= \mathbb Z$ is the difference of the
two parallelizations. Similarly, in Theorem~\ref{th:2}, $a$ is
replaced by $a$ mod $2$ in
$\mathbb Z_2 = \pi_3(SO)/\im j_*(\pi_3(SO(3)))$, where $j$ is the inclusion
$j:SO(3) \subset SO.$
Now we describe a concrete stable parallelization of $S^3 \times S^2$ we shall use.
\end{rem}

Hence, we want to choose a trivialization of the stable tangent bundle
\[
T(S^3 \times S^2) \oplus \mathcal E^1\longrightarrow S^3 \times S^2,
\]
where $\mathcal E^1$ is the trivial real line bundle.
This $6$-dimensional vector bundle is the same as the restriction $T(S^3 \times
\mathbb R^3) \raisebox{-3pt}{$\Bigr|_{S^3 \times S^2}$} = (p_1^* TS^3
\oplus p_2^* T\mathbb R^3)\raisebox{-3pt}{$\Bigr|_{S^3 \times S^2}$}$, where
$p_1$ and $p_2$ are the projections of $S^3 \times \mathbb R^3$ onto the factors.
The quaternionic multiplication on $S^3$ gives a trivialization of $TS^3$,
i.e.\ an identification with $S^3 \times \mathbb R^3$.
We need a trivialization of $T(TS^3)$.
The standard spherical metric on $S^3$ gives a connection on the bundle
$TS^3 \longrightarrow  S^3$, that is a ``horizontal'' $\mathbb R^3 \subset T(TS^3)$ at any point.
The trivialization of $TS^3$ gives a trivialization of both the horizontal and the vertical (tangent to the fibers) components in $T(TS^3)$.
Restricting this to the sphere bundle $S(TS^3) = S^3 \times S^2$ we obtain the required trivialization of
\[
T(TS^3)\raisebox{-3pt}{$\Bigr|_{S^3 \times S^2}$} = T(S^3 \times
\mathbb R^3)\raisebox{-3pt}{$\Bigr|_{S^3 \times S^2}$} = T(S^3 \times S^2) \oplus \mathcal E^1.
\]

\begin{proof}[Proof of Theorem~\ref{th:1}]
Having fixed a stable parallelization of $M^5$ any framed immersion $f:\ M^5
\longrightarrow  \mathbb R^q$ gives a map $M^5 \longrightarrow  SO_q$ that -- by a slight abuse of notation -- we will denote by $df$.

By the Smale--Hirsch immersion theory \cite{S1,H} the map
\[
\begin{array}{cll}
\FrImm(M, \mathbb R^q) & \longrightarrow & [M, SO_q]\\
\reg[f] & \longrightarrow & [df]
\end{array}
\]
induces a bijection, where $[M, SO_q]$ denotes the homotopy classes of maps $M \longrightarrow  SO_q$.

Since $M^5$ is simply connected there is a cell-decomposition having a single $0$-cell, a single $5$-cell, and no $1$-dimensional, neither $4$-dimensional cells.

Let $\overset{\circ}{M}$ be the punctured $M^5$: $\overset{\circ}{M} = M^5 \setminus D^5$.
From the Puppe sequence of the pair $(\overset{\circ}{M}, \partial \overset{\circ}{M})$ (see
\cite{Hu}),

$$S^4 = \partial \overset{\circ}{M }\subset \overset{\circ}{M} \subset M \longrightarrow S^5,$$ it follows that
the restriction map  $[M^5, SO_q] \to [\overset{\circ}{M}, SO_q]$ is a
bijection, since $\pi_4(SO)=0$ and  $\pi_5(SO_q) = 0$.

Now consider the Puppe sequence of the pair $(\overset{\circ}{M}, sk_2\, M)$.
Note that $sk_2 \, M$ is a bouquete of $2$-spheres, while the quotient $\overset{\circ}{M} / sk_2\, M$ is homotopically equivalent to a bouquette of $3$-spheres.
Hence, a part of the Puppe sequence looks like this:
\[
sk_2\, M \subset \overset{\circ}{M} \longrightarrow \vee S^3 \longrightarrow S(sk_2\, M) = \vee S^3
\]
where $S(\ )$ means the suspension.
Mapping the spaces of this Puppe sequence to $SO_q$, $q \geq 5$, we obtain the following exact sequence of groups (we omit $q$):
\[
\bigl[sk_2\, \overset{\circ}{M}, \, SO\bigr] \longleftarrow \bigl[\overset{\circ}{M}, \, SO\bigr] \longleftarrow \bigl[\vee S^3, \, SO\bigr] \overset{\alpha}{\longleftarrow} \bigl[S(sk_2\, M), SO\bigr].
\]
Here $\bigl[sk_2\, \overset{\circ}{M}, \, SO] = 0$, because $\pi_2(SO) = 0$.

Since $\pi_3(SO) = \mathbb Z$ the group $[\vee S^3, \, SO]$ can be identified with the group of $3$-dimensional cochains of $M$ with integer coefficients, i.e.\ $[\vee S^3, \, SO] = C^3(M; \mathbb Z)$.

Since there are no $4$-dimensional cells this is also the group of $3$-dimen\-sional cocycles.
The group $\bigl[S(sk_2\, M), \, SO\bigr]$ can be identified with the group of $2$-dimensional cochains $C^2(M; \mathbb Z)$.

\begin{lem}
The map $\alpha$ can be identified with the coboundary map
\[
\delta{:}\,C^2(M; \mathbb Z) \longrightarrow C^3(M; \mathbb Z).
\]
\end{lem}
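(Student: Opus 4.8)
The plan is to make the Puppe connecting map explicit on each sphere and then read off the cellular coboundary. Write $sk_2\,M = \bigvee_j S^2_j$, with one $2$-sphere $S^2_j$ per $2$-cell $e^2_j$, and recall that $\overset{\circ}{M}$ is obtained from $sk_2\,M$ by attaching the $3$-cells $e^3_i$ along maps $\phi_i : S^2 \to sk_2\,M$. Collapsing $sk_2\,M$ turns each $3$-cell into a $3$-sphere, so $\overset{\circ}{M}/sk_2\,M = \bigvee_i S^3_i$ is indexed by the $3$-cells, while $S(sk_2\,M) = \bigvee_j S^3_j$ is indexed by the $2$-cells. First I would record the standard fact that the Puppe connecting map $\partial : \overset{\circ}{M}/sk_2\,M \to S(sk_2\,M)$, restricted to the summand $S^3_i$, is the suspension $S\phi_i : S^3 \to \bigvee_j S^3_j$ of the attaching map of $e^3_i$; the map $\alpha$ is then $\partial^{*}$.

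Next I would identify the homotopy class of $S\phi_i$ in $\pi_3\bigl(\bigvee_j S^3_j\bigr)$. Since a finite wedge of $3$-spheres has $\pi_3\bigl(\bigvee_j S^3_j\bigr) = \bigoplus_j \mathbb Z$ (the first Whitehead products land in $\pi_5$ and so do not interfere in dimension $3$), the class of $S\phi_i$ is determined by its components under the collapse maps $\bigvee_j S^3_j \to S^3_j$. The $j$-th component is the degree of $S^3 \xrightarrow{S\phi_i} \bigvee_j S^3_j \to S^3_j$, which, since suspension preserves degree, equals the degree of $S^2 \xrightarrow{\phi_i} \bigvee_j S^2_j \to S^2_j$. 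But this last degree is exactly the incidence number $[e^3_i : e^2_j]$ occurring in the cellular boundary $\partial e^3_i = \sum_j [e^3_i : e^2_j]\, e^2_j$. Hence $S\phi_i$ represents $\sum_j [e^3_i : e^2_j]\,\iota_j$, where $\iota_j$ generates the $j$-th summand.

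Finally I would feed this into $[-, SO]$. Under the identifications already fixed above, an element of $\bigl[S(sk_2\,M), SO\bigr] = \bigl[\bigvee_j S^3_j, SO\bigr]$ is a cochain $c \in C^2(M;\mathbb Z)$ with value $c(e^2_j) \in \pi_3(SO) = \mathbb Z$ on $S^3_j$, and an element of $\bigl[\bigvee_i S^3_i, SO\bigr] = C^3(M;\mathbb Z)$ is recorded by its values on the $S^3_i$. Representing $c$ by $g : \bigvee_j S^3_j \to SO$, the image $\alpha(c)$ takes on $S^3_i$ the value $g \circ \partial|_{S^3_i} = g \circ S\phi_i$, whose class in $\pi_3(SO)$ is obtained by applying the homomorphism $g_* : \pi_3\bigl(\bigvee_j S^3_j\bigr) \to \pi_3(SO)$ to the element of the previous step. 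Linearity of $g_*$ then gives
\[
\alpha(c)(e^3_i) = g_*\Bigl(\sum_j [e^3_i : e^2_j]\,\iota_j\Bigr) = \sum_j [e^3_i : e^2_j]\, c(e^2_j) = c(\partial e^3_i) = (\delta c)(e^3_i),
\]
which is precisely the cellular coboundary, proving the Lemma.

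I expect the only delicate points to be the first step -- checking that the connecting map is the suspended attaching map with the correct orientation convention -- and confirming that the identifications of $\bigl[\bigvee S^3, SO\bigr]$ with cochains chosen earlier use the dual bases, so that no spurious sign creeps into the incidence numbers. The additivity invoked in the last step is safe, since $g_*$ is a genuine homomorphism of homotopy groups and the group structure on $\bigl[\bigvee S^3, SO\bigr]$ coming from the co-$H$ structure of the wedge agrees with addition of cochains.
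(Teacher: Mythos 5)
Your proof is correct, and it arrives at the same underlying identification as the paper --- the Puppe connecting map is the suspension of the attaching maps, whose component degrees are exactly the cellular incidence numbers --- but the execution is genuinely different. The paper (in its Appendix) first reduces to the special case of a single $2$-cell and a single $3$-cell by collapsing all but one $2$-cell, and then verifies the key degree statement by hand: it chooses a concrete model for the degree-$k$ attaching map $\theta$ (a branched $k$-fold cover of $S^2$ along $S^0$), builds the mapping cones $A = \overset{\circ}{M} \cup C(sk_2\, M)$ and $B = C\overset{\circ}{M} \cup C(sk_2\, M)$ explicitly, and exhibits a deformation retraction $r$ of $B$ onto a subspace $B_1 \simeq S^3$ under which $r|_A$ becomes the suspension of $\theta$, hence a degree $k$ map, so the inclusion $A \subset B$ induces multiplication by $k$ on $\pi_3$. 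You instead quote the general fact that the cofiber map $\overset{\circ}{M}/sk_2\, M \to S(sk_2\, M)$ restricts on each summand $S^3_i$ to $S\phi_i$, and handle all cells simultaneously using $\pi_3\bigl(\bigvee_j S^3_j\bigr) = \bigoplus_j \mathbb Z$ (by Hurewicz on the $2$-connected wedge, even more simply than via Hilton's theorem) together with naturality and linearity of $g_*$. Your route is cleaner and more general --- it needs no reduction step and no choice of model for the attaching maps --- at the cost of invoking the standard identification of the connecting map, which is precisely the point the paper elects to check geometrically; conversely, the paper's argument is more self-contained but only sketches the multi-cell bookkeeping (``the general case follows easily''), which your wedge computation carries out in full. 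The delicate points you flag (the orientation convention for the connecting map, and compatibility of the co-$H$ group structure on $\bigl[\bigvee S^3, SO\bigr]$ with cochain addition) are real but harmless here, since the Lemma asserts an identification and any global sign is absorbed into the chosen isomorphisms with the cochain groups $C^2(M;\mathbb Z)$ and $C^3(M;\mathbb Z)$.
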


Proof of this Lemma will be given in the Appendix.

Hence the cokernel of $\alpha$, i.e.\ $\bigl[\overset{\circ}{M}, SO\bigr] =
\FrImm(M, \mathbb R^q)$ can be identified with the cokernel of $\delta$, i.e.\ with $H^3(M; \mathbb Z)$.
\end{proof}

\begin{rema}
\label{rema:1}
In the case when $M = S^3 \times S^2$ and $N \in S^2$  is a fixed point
in $S^2,$ for example the North pole, the inclusion $S^3 \hookrightarrow M$,
$x \longrightarrow  (x, N)$ gives an isomorphism
\[
[M, SO] \longrightarrow [S^3, SO].
\]
Hence, for $M = S^3 \times S^2$ two framed immersions $M^5 \longrightarrow
\mathbb R^7$ (or $M^5 \longrightarrow  S^7$) are regularly homotopic if their restrictions to $S^3 \times N$ are framed regularly homotopic (adding the two normal vectors of $S^3$ in $M^5$ to the framing).
\end{rema}

\begin{lemma}
\label{lem:1}
The inclusion $j:\ SO_3 \hookrightarrow SO_q$ $(q \geq 5)$ induces in $\pi_3$
the multiplication by $2$ (if we choose the generators in $\pi_3(SO_3) =
\mathbb Z$ and in $\pi_3(SO_q) = \mathbb Z$ properly), i.e. for any $x \in \pi_3(SO_3) = \mathbb Z$ the image $j_*(x) \in \pi_3(SO) = \mathbb Z$ is $2x$.
\end{lemma}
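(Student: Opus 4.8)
The plan is to detect the homomorphism $j_\ast\colon \pi_3(SO_3)\to\pi_3(SO_q)$ by the first Pontryagin class, using the identification of $\pi_3(SO_n)$ with rank-$n$ oriented bundles over $S^4$ via the clutching construction $S^3\to SO_n$. Every such class $\xi$ has a well-defined $p_1(\xi)\in H^4(S^4;\mathbb Z)=\mathbb Z$, and $\xi\mapsto p_1(\xi)$ is a homomorphism compatible with stabilization, so it fits into a commutative triangle
\[
\pi_3(SO_3)\xrightarrow{\ j_\ast\ }\pi_3(SO_q)\xrightarrow{\ p_1\ }\mathbb Z,
\]
whose lower composite is $p_1$ computed on $SO_3$. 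Since $p_1$ is stable, the integers $p_1(\eta)$ and $p_1(j_\ast\eta)$ coincide for the generator $\eta$ of $\pi_3(SO_3)$.

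First I would compute $p_1$ on the generator of $\pi_3(SO_3)$. Using the double cover $Sp(1)=SU(2)\to SO_3$, which is an isomorphism on $\pi_3$, the generator $\eta$ corresponds to the adjoint $SO_3$-bundle of the quaternionic Hopf $SU(2)$-bundle over $S^4$ (the one with $c_2=1$). A direct Chern-class computation, via $\mathrm{ad}(P)\otimes\mathbb C=\mathrm{Sym}^2E$ for the associated rank-$2$ complex bundle $E$ with $c_1=0$, $c_2=1$, gives $p_1(\eta)=\pm4$.

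Next I would identify the generator of the stable group $\pi_3(SO)=\pi_3(SO_q)=\mathbb Z$ through $p_1$, and here lies the crux: I claim $p_1$ sends a stable generator to $\pm2$, equivalently that $p_1\colon\pi_3(SO)\to\mathbb Z$ is injective with image $2\mathbb Z$. To pin down this factor of $2$ I would pass through $SO_4$, whose double cover is $Spin(4)=Sp(1)\times Sp(1)$ acting on $\mathbb H$ by $(p,q)\cdot x=px\bar q$. Milnor's clutching bundles $\xi_{h,j}$, with clutching map $v\mapsto(x\mapsto v^hxv^j)$, give a basis $a=\xi_{1,0}$, $b=\xi_{0,1}$ of $\pi_3(SO_4)=\mathbb Z^2$, with $e(\xi_{h,j})=h+j$ and $p_1(\xi_{h,j})=\pm2(h-j)$. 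From the fibration $SO_4\to SO_5\to S^4$ one has $\pi_3(SO_5)=\mathrm{coker}\,\partial$ with $\partial(\iota_4)=[TS^4]=\xi_{1,1}=a+b$ (since $TS^4$ has Euler number $2$ and $p_1=0$); hence $\pi_3(SO_5)=\mathbb Z^2/\langle a+b\rangle$ is generated by $[a]$, with $[b]=-[a]$ and $p_1([a])=\pm2$, confirming the claim.

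Finally, the inclusion $SO_3\hookrightarrow SO_4$ stabilizing $1\in\mathbb H$ lifts to the diagonal $v\mapsto(v,v)$, i.e.\ $x\mapsto vx\bar v=vxv^{-1}$, which is $\xi_{1,-1}=a-b$; thus $j_\ast\eta=[a-b]=2[a]$ already in $\pi_3(SO_5)$, in agreement with $p_1(\eta)=\pm4=2\cdot(\pm2)$. Combined with the injectivity of $p_1$ on $\pi_3(SO)$, this shows that $j_\ast$ is multiplication by $2$ once $\eta$ and $[a]$ are chosen as generators. The main obstacle is the middle step---establishing that the stable generator has $p_1=\pm2$ rather than $\pm1$; the $Spin(4)$/Milnor-bundle computation above is exactly what rules out an odd value, reflecting that every relevant bundle over $S^4$ is spin and so has even $p_1$.
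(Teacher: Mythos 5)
Your proof is correct, but it takes a genuinely different route from the paper's. The paper disposes of the lemma in three lines: it considers the Stiefel fibration $SO_3 \to SO_5 \to V_2(\mathbb R^5)$, quotes $\pi_3\bigl(V_2(\mathbb R^5)\bigr) = \mathbb Z_2$ and $\pi_2(SO_3)=0$, and reads off from the exact sequence $\pi_3(SO_3) \to \pi_3(SO_5) \to \mathbb Z_2 \to 0$ that a homomorphism $\mathbb Z \to \mathbb Z$ with cokernel $\mathbb Z_2$ must be multiplication by $\pm 2$; this pins down the index of the image without ever naming generators. You instead pass through $SO_4$ using Milnor's quaternionic clutching bundles $\xi_{h,j}$, identify $\pi_3(SO_5)$ as $\mathbb Z^2/\langle a+b\rangle$ via the fibration $SO_4 \to SO_5 \to S^4$ with $\partial(\iota_4)=[TS^4]=\xi_{1,1}$, and observe that the generator of $\pi_3(SO_3)$ maps to the conjugation clutching $\xi_{1,-1}=a-b$, whence $j_*\eta = 2[a]$; all the steps check out (in particular $\mathrm{Sym}^2E$ with $c_1=0$, $c_2=1$ does give $p_1=\pm 4$, and $e=h+j$, $p_1=\pm 2(h-j)$ are Milnor's computations, an input of the same ``well-known'' calibre as the paper's $\pi_3(V_2(\mathbb R^5))=\mathbb Z_2$). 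Two remarks on the trade-off. First, your Pontryagin-class thread is logically redundant: once you have $j_*\eta=[a-b]=2[a]$ in $\mathrm{coker}\,\partial$, the lemma is proved, and the computations $p_1(\eta)=\pm 4$, $p_1([a])=\pm 2$ serve only as a (welcome) cross-check, so you could present the quaternionic half alone. Second, your longer route buys something the paper's argument does not: explicit generators. Your cokernel description $\pi_3(SO_5)=\mathbb Z^2/\langle a+b\rangle$ simultaneously proves the assertions of the paper's Remark~\ref{rema:2} (that $j_{4*}$ is epimorphic and that $\pi_3(S^3)$ surjects onto $\pi_3(SO_5)/j_{4*}\pi_3(SO_3)=\mathbb Z_2$), and your identification of $\eta$ with the conjugation map $v\mapsto(x\mapsto vx\bar v)$ is exactly the double cover $S^3\to SO_3$ that the paper later uses in the proof of Lemma~A, so the two choices of generator are visibly consistent.
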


\begin{proof}
It is well-known that $\pi_3(SO_5) \approx \pi_3(SO_6) \approx \dots \approx \pi_3(SO)$ and by Bott's result \cite{B} $\pi_3(SO) \approx \mathbb Z$.
Let us consider $V_2(\mathbb R^5) = SO_5 / SO_3$.
It is well-known that $\pi_3\bigl(V_2(\mathbb R^5)\bigr) = \mathbb Z_2$ (see for example \cite{M-S}).
It is also well-known that
\[
\pi_3(SO_3) = \mathbb Z.
\]
Now the exact sequence of the fibration $SO_5 \longrightarrow  V_2(\mathbb R^5)$ gives
that the homomorphism $\pi_3(SO_3) \longrightarrow  \pi_3(SO_5)$ induced by
the inclusion is a multiplication by $+2$ (or $-2$, but choosing the
generators properly it can be supposed that it is multiplication by $+2$).
\end{proof}

\begin{rema}
\label{rema:2}
It is well-known that
\[
\pi_3(SO_4) = \pi_3(S^3) \oplus \pi_3(SO_3)
\]
and the map $j_{4*}:\ \pi_3(SO_4) \longrightarrow  \pi_3(SO_5)$ induced by the inclusion $j_4 :\ SO_4 \hookrightarrow SO_5$ is epimorphic.

It follows that $j_{4*}$ maps $\pi_3(S^3) = \mathbb Z$ to the group $\mathbb Z_2 = \pi_3(SO_5) / j_{4*}\bigl(\pi_3(SO_3)\bigr)$ epimorphically.
\end{rema}

From now on we shall denote by $M$ the manifold $S^3 \times S^2$ (except in the Appendix).
We shall write simply $S^3$ for the subset $S^3 \times N \subset S^3 \times S^2$, where $N \in S^2$.

\begin{lemma}
\label{lem:2}
For any class $2m \in 2\mathbb Z = \im j_{4*} \subset \mathbb Z = \pi_3(SO)$
there is a diffeomorphism $\alpha_m : \ M \longrightarrow  M$ such that for
any framed immersion $f:\ M \longrightarrow  \mathbb R^7$ the difference of the regular homotopy classes of $f$ and $f \circ \alpha_m$ is $2m$, i.e.
\[
\reg[f \circ \alpha_m] - \reg[f] \in \pi_3 (SO) = \mathbb Z
\]
is $2m$.
\end{lemma}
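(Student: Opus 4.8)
The plan is to realise the shift $2m$ by an explicit fibrewise rotation of the trivial bundle $M=S^3\times S^2\to S^3$. Pick a smooth map $\phi_m\colon S^3\to SO_3$ representing $m$ times the generator of $\pi_3(SO_3)=\mathbb Z$, let $SO_3$ act on $\mathbb R^3\supset S^2$ in the usual way, and set
\[
\alpha_m(x,y)=(x,\ \phi_m(x)\cdot y),\qquad (x,y)\in S^3\times S^2 .
\]
This is a diffeomorphism, with inverse $(x,y)\mapsto(x,\phi_m(x)^{-1}y)$, covering the identity on the $S^3$-factor. I will show that $\alpha_m$ does the job.

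First I would recall from the proof of Theorem~\ref{th:1} that, once the stable parallelization is fixed, a framed immersion $f$ has a Gauss map $g:=df\colon M\to SO_q$ and $\reg[f]=\iota_0^*[g]\in\pi_3(SO)$, where $\iota_0\colon S^3\to M$, $\iota_0(x)=(x,N)$, is the restriction of Remark~\ref{rema:1}. The composite $f\circ\alpha_m$ is again framed, and in the fixed trivialization its Gauss map is the pointwise product $(g\circ\alpha_m)\cdot D\alpha_m$, where $D\alpha_m\colon M\to GL_6^+$ is the derivative of $\alpha_m$ read off in the parallelization. Since pointwise multiplication in $SO$ induces addition in $\pi_3(SO)$ after restriction along $\iota_0$,
\[
\reg[f\circ\alpha_m]-\reg[f]=\bigl[g\circ(\alpha_m\circ\iota_0)\bigr]-[g\circ\iota_0]+\iota_0^*[D\alpha_m].
\]

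Next I would kill the first two terms. Writing $\iota_1:=\alpha_m\circ\iota_0$, so $\iota_1(x)=(x,\phi_m(x)N)$, the classes $[\iota_0]$ and $[\iota_1]$ in $\pi_3(M)=\pi_3(S^3)\oplus\pi_3(S^2)$ agree in the $\pi_3(S^3)$-summand and differ only in the $\pi_3(S^2)$-summand. Their $g$-images therefore differ by a class that factors through $g|_{\{*\}\times S^2}\colon S^2\to SO$, which is null-homotopic because $\pi_2(SO)=0$; hence $[g\circ\iota_1]=[g\circ\iota_0]$ and the difference reduces to $\iota_0^*[D\alpha_m]$, manifestly independent of $f$. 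Finally, restricting $D\alpha_m$ to $S^3\times N$ and splitting the (stabilized) tangent bundle into $S^3$- and $S^2$-directions, $D\alpha_m$ is block lower-triangular: the identity on the base directions, the rotation $\phi_m(x)$ on the three fibre directions, and an off-diagonal block coming from $d\phi_m$. As the matrix is block-triangular with positive determinant, scaling the off-diagonal block to $0$ is a homotopy inside $GL_6^+$, so $D\alpha_m|_{S^3\times N}$ is homotopic to $x\mapsto\phi_m(x)$ viewed in $SO_3\hookrightarrow SO$. By Lemma~\ref{lem:1} its class is $j_*[\phi_m]=2m$, which is the assertion.

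The step I expect to be the main obstacle is the bookkeeping in the last paragraph: checking that in the concrete parallelization fixed before Theorem~\ref{th:1} (the quaternionic framing of $TS^3$ together with the horizontal/vertical splitting of $T(TS^3)$) the restricted derivative $D\alpha_m|_{S^3\times N}$ really collapses, up to homotopy, to the fibrewise rotation $\phi_m$ included in $SO_3$, with no residual twisting from the connection or from the mixed block. Once this normal form is established, Lemma~\ref{lem:1} supplies the factor $2$ and everything else is formal.
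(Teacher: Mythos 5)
Your proposal is correct and follows essentially the same route as the paper's proof: the same fibrewise rotation $\alpha_m(x,y)=(x,\mu_m(x)y)$, the same chain-rule decomposition of the Gauss map into a pointwise product, the same use of $\pi_2(SO)=0$ to identify $[df\circ\alpha_m|_{S^3}]$ with $[df|_{S^3}]$, and the same appeal to Lemma~\ref{lem:1} to extract the factor $2$. The one place you are actually more careful than the paper is the final normal-form step: the paper's Sublemma simply asserts that $d\alpha_m$ acts by the identity on $p_1^*TS^3$ and by $\mu_m(x)$ on the fibre directions, silently discarding the off-diagonal $d\mu_m$ block that you explicitly homotope away inside $GL_6^+$.
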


\begin{proof}
Let $\mu_m:\ S^3 \longrightarrow  SO_3$ be a map representing the class $m \in \pi_3(SO_3)$ and define the diffeomorphism
\[
\alpha_m :\ S^3 \times S^2 \longrightarrow S^3 \times S^2
\]
by the formula
\[
(x,y) \longmapsto (x, \mu_m(x)y).
\]
We have the following diagram:
\[
\begin{array}{ccc}
\reg[f]\in \FrImm(M, \mathbb R^q) & \longrightarrow & \FrImm(S^3, \mathbb R^q) \ni  \reg\Bigl[f\raisebox{-3pt}{$\Bigr|_{S^3}$}\Bigr]\\
\downarrow\approx & & \downarrow\approx\\
{[M, SO_q]} & \longrightarrow & [S^3, SO_q] \\
df & \longmapsto & df\raisebox{-3pt}{$\Bigr|_{S^3}$}\\
d(f\circ \alpha_m) & \longmapsto & d(f\circ \alpha_m)\raisebox{-3pt}{$\Bigr|_{S^3}$}
\end{array}
\]
It shows that the regular homotopy class of the (framed) immersion $f$ is
detected by the homotopy class of $df\Bigr|_{S^3}$ in $\pi_3(SO)$, while
the regular homotopy class of $f\circ \alpha_m$ is detected by the homotopy
class of $d(f \circ \alpha_m)\Bigr|_{S^3}.$

So we have to compare the homotopy classes of maps
\[
df\raisebox{-3pt}{$\Bigr|_{S^3}$}:\ S^3 \longrightarrow SO_q \ \text{ and } \ d(f\circ \alpha_m)\raisebox{-3pt}{$\Bigr|_{S^3}$}:\ S^3 \longrightarrow SO_q.
\]
By the chain rule one has:
\[
d(f\circ \alpha_m)\raisebox{-3pt}{$\Bigr|_{S^3}$} = df\raisebox{-3pt}{$\Bigr|_{\alpha_m(S^3)}$} \cdot d \alpha_m \raisebox{-3pt}{$\Bigr|_{S^3}$}.
\]

The restriction map $\alpha_m\raisebox{-3pt}{$\Bigr|_{S^3}$}:\ S^3 \longrightarrow  S^3 \times S^2$ is homotopic to a map into $S^3 \vee S^2$, representing in the third homotopy group $\pi_3(S^3 \vee S^2) = \mathbb Z \oplus \mathbb Z$ the element $(1, *)$, where $*$ is an integer, $* \in \pi_3(S^2) = \mathbb Z$ (at this point its value is not important, but later we shall show that it is $m$, see Lemma~A).
Since the map $df$ maps $S^3 \times S^2$ into $SO$ and $\pi_2(SO) = 0$, the map $df\raisebox{-3pt}{$\Bigr|_{S^3 \vee S^2}$}$ can be extended to
\[
S^3 \vee D^3 \cong S^3.
\]

Finally we have that $d(f\circ \alpha_m)\raisebox{-3pt}{$\Bigr|_{S^3}$}$ is homotopic to the pointwise product of the maps $df\raisebox{-3pt}{$\Bigr|_{S^3}$}$ and $d\alpha_m\raisebox{-3pt}{$\Bigr|_{S^3}$}$.

But it is well-known that this gives the sum of the homotopy classes $\Bigl[df\raisebox{-3pt}{$\Bigr|_{S^3}$}\Bigr] \in \pi_3(SO)$ and $\Bigl[ d\alpha_m \raisebox{-3pt}{$\Bigr|_{S^3}$}\Bigr] \in \pi_3 (SO)$.

It remained to show the following

\begin{sublem}
$\Bigl[ d\alpha_m\raisebox{-3pt}{$\Bigr|_{S^3}$}\Bigr] = 2m \in \pi_3(SO_q) = \mathbb Z$.
\end{sublem}

\begin{proof}
The differential $d \alpha_m$ acts on $T(S^3 \times \mathbb
R^3)\raisebox{-3pt}{$\Bigr|_{S^3 \times S^2}$} = p_1^* TS^3 \oplus p_2^*
T\mathbb R^3\raisebox{-3pt}{$\Bigr|_{S^3 \times S^2}$}$ as follows:
by identity on $p_1^*TS^3$ and by $\mu_m(x)$ on $(x,y) \times \mathbb R^3$ for any $x \in S^3$, $y \in S^2$.

Hence, $d\alpha_m\raisebox{-3pt}{$\Bigr|_{S^3}$}$ is $j \circ \mu_m$, where $j:\ SO_3 \hookrightarrow SO_q$ is the inclusion.
Recall that the map $\mu_m:\ S^3 \longrightarrow  SO_3$ was chosen so that its homotopy class $[\mu_m] \in \pi_3(SO_3)$ is $m \in \mathbb Z = \pi_3(SO_3)$.
Since $j_*$ is ``the multiplication by $2$'' map it follows that $\Bigl[d\alpha_m \raisebox{-3pt}{$\Bigr|_{S^3}$}\Bigr] = 2m$.
\end{proof}
This ends the proof of Lemma \ref{lem:2} too.
\end{proof}

\begin{pro}
Any self-diffeomorphism of $S^3 \times S^2$ changes the regular homotopy class
of any immersion by adding an element of the subgroup in $\im j_* = 2\mathbb Z \subset \mathbb Z = \pi_3(SO)$.
That is for any framed immersion $f:\ M \longrightarrow  \mathbb R^q$ with (framed) regular homotopy class $\reg[f] \in [M, SO] = \pi_3(SO)$ and any diffeomorphism $\varphi : \ M \longrightarrow  M$ the difference of regular homotopy classes $\reg[f] - \reg[f \circ \varphi]$ belongs to the subgroup $\im j_* = 2\mathbb Z$ in $\mathbb Z = \pi_3(SO)$.
\end{pro}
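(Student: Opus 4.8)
The plan is to use Remark~\ref{rema:1} to push everything onto $S^3 = S^3\times N$ and then to isolate the single mod-$2$ class that must be shown to vanish. First I would repeat the computation in the proof of Lemma~\ref{lem:2}: by the chain rule $d(f\circ\varphi)|_{S^3} = (df\circ\varphi)|_{S^3}\cdot d\varphi|_{S^3}$, and if $a\in\pi_3(S^3)=\mathbb Z$ and $b\in\pi_3(S^2)=\mathbb Z$ denote the two components of $\varphi|_{S^3}\colon S^3\to S^3\vee S^2\subset M$, then $\pi_2(SO)=0$ kills the $S^2$-factor exactly as in Lemma~\ref{lem:2}, so $[(df\circ\varphi)|_{S^3}] = a\cdot\reg[f]$. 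Hence in $\pi_3(SO)=\mathbb Z$,
\[
\reg[f\circ\varphi]-\reg[f] = (a-1)\,\reg[f] + [\,d\varphi|_{S^3}\,].
\]
Since $\varphi$ is a diffeomorphism it acts on $H_3(M)=\mathbb Z$ by $\pm1$, so $a=\deg(p_1\circ\varphi|_{S^3})=\pm1$ and $(a-1)\reg[f]\in 2\mathbb Z$. Thus (up to an irrelevant sign) the Proposition reduces to the single claim $[\,d\varphi|_{S^3}\,]\in\im j_* = 2\mathbb Z$.

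For the second step I would exploit that $\varphi$ carries $S^3=S^3\times N$ diffeomorphically onto an embedded sphere $S'=\varphi(S^3)$ and that $D\varphi$ respects the tangent--normal splitting, sending $TS^3$ to $TS'$ and the trivial $2$-plane bundle $\nu=\nu(S^3)$ onto $\nu'=\nu(S')$ (again a trivial rank-$2$ bundle). Trivialising $\nu'$ by the push-forward of the given trivialisation of $\nu$, the differential $d\varphi|_{S^3}$ becomes, modulo the framing comparison discussed below, a map into $SO_3\times SO_2\subset SO$, the $SO_3$-factor recording the tangential derivative and the $SO_2$-factor the normal one. As $\pi_3(SO_2)=\pi_3(S^1)=0$ the normal factor is inessential, while the tangential factor is an $SO_3$-valued map whose stable class lies in $\im j_*=2\mathbb Z$ by Lemma~\ref{lem:1}.

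The hard part, and the genuine content of the statement, is the framing comparison I suppressed: the fixed product parallelisation $\tau$ restricted to $S'$ need not be adapted to the splitting $TS'\oplus\nu'$, and the discrepancy is a map $C\colon S'\to SO_6$ whose class must also be shown even. This cannot be automatic, because two framings of the trivial bundle over a $3$-sphere differ by an arbitrary element of $\pi_3(SO)=\mathbb Z$ (the ``cross term'' lives in $\pi_3\bigl(SO_6/(SO_3\times SO_3)\bigr)=\mathbb Z_2$ and is a priori the nonzero class); evenness must therefore use that $S'$ is the image of $S^3\times N$ under an honest diffeomorphism, not an arbitrary embedded sphere carrying an arbitrary framing. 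I would compute the resulting mod-$2$ class through the mechanism of Remark~\ref{rema:2}: regarded as a map to $SO_5$, $d\varphi|_{S^3}$ is homotopic to a map into $SO_4$ because $\pi_3(S^4)=0$ annihilates the composite $S^3\to SO_5\to S^4$; projecting such an $SO_4$-lift along $SO_4\to S^3$ yields a map $S^3\to S^3$, and by Remark~\ref{rema:2} the mod-$2$ reduction of $[\,d\varphi|_{S^3}\,]$ equals the degree of this map modulo $2$. Geometrically this degree is the winding of the normal framing of $S'$ against $\tau$, and is governed by the Hopf component $b=[\,p_2\circ\varphi|_{S^3}\,]\in\pi_3(S^2)$.

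I expect this final point to be the main obstacle: showing that this normal winding is even is not a formal consequence of the preceding homotopy theory and should require the explicit framing computations (of the kind carried out in the auxiliary Lemmas~A and~B) rather than a soft argument. Once the mod-$2$ class is shown to vanish, combining it with the evenness already established for the tangential $SO_3$-part gives $[\,d\varphi|_{S^3}\,]\in 2\mathbb Z$, and together with the first paragraph this yields $\reg[f]-\reg[f\circ\varphi]\in\im j_* = 2\mathbb Z$, as asserted.
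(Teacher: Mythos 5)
Your first paragraph is sound and essentially reproduces the chain-rule computation from the paper's proof of Lemma~\ref{lem:2}: the identity $\reg[f\circ\varphi]-\reg[f]=(a-1)\reg[f]+\bigl[d\varphi|_{S^3}\bigr]$ with $a=\pm 1$ is correct, and it correctly isolates the crux, namely that $\bigl[d\varphi|_{S^3}\bigr]\in 2\mathbb Z$ for every self-diffeomorphism $\varphi$. (Your reduction even covers $a=-1$, a case the paper's Lemma~A, stated for positive diffeomorphisms, does not address explicitly.) But from there on your text is a plan, not a proof: you yourself flag that the evenness of the framing-comparison class $C$ --- equivalently, of the $\pi_3(S^3)$-component of an $SO_4$-lift of $d\varphi|_{S^3}$ --- ``should require explicit framing computations,'' and you never carry them out. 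As you correctly observe, this evenness is not a formal consequence of the block decomposition into $SO_3\times SO_2$ plus Lemma~\ref{lem:1}, since an arbitrary framing of an embedded $3$-sphere can differ from the restricted parallelization by any element of $\pi_3(SO)=\mathbb Z$. So at the end of your argument the Proposition is genuinely open; this missing step is precisely where the bulk of the paper's work lies.

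The paper closes this gap by a different mechanism that avoids any direct analysis of the tangential/normal framing discrepancy along $S'=\varphi(S^3\times N)$. First, Lemma~A computes the homotopy class of $\varphi|_{S^3\times N}$ in $\pi_3(M)$: for positive $\varphi$ it agrees with that of $\alpha_m|_{S^3\times N}$ for some $m$, detected by the Hopf invariant, which is computed explicitly for $\alpha_1$ via the double covering $\mu_1:S^3\to SO_3$ (preimages of cosets of $SO_2$ are great circles of linking number~$1$). Second, Lemma~B shows that the regular homotopy class of $f\circ\varphi$ depends \emph{only} on this homotopy class: after stabilizing to diffeomorphisms $\hat\varphi$, $\hat\psi$ of $M\times D^q$, the restrictions to $S^3\times N$ become framed isotopic, and Thom's isotopy extension produces an ambient isotopy, whence $d\hat\varphi\simeq d\hat\psi$ as maps to $SO$; it is exactly this isotopy argument that disposes of your cross-term $C$ wholesale, without ever computing it. Lemma~\ref{lem:2} then gives $\reg[f\circ\alpha_m]-\reg[f]=2m\in 2\mathbb Z$. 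To complete your more computational route you would in effect have to reprove Lemma~B's framed-isotopy step, or else show directly that the normal winding equals the Hopf component $m$ and that the total contribution is $2m$ --- which is Lemma~A's computation in disguise. So your prediction that the missing content is that of Lemmas~A and~B is accurate, but as submitted the key step is conjectured rather than proved.
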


The proof will rely on the following two lemmas (Lemma A and Lemma B).

\begin{dfn}
A self-diffeomorphism $\varphi : \ S^3 \times S^2 \longrightarrow S^3 \times S^2$ will be called \emph{positive} if it induces on $H_3(S^3 \times S^2) = \mathbb Z$ the identity.
\end{dfn}

\begin{lemA}
For any positive self-diffeomorphism $\varphi$ there exists a natural number
$m \in \mathbb Z$ such that for $N \in S^2$ the restrictions
$\varphi\raisebox{-3pt}{$\Bigr|_{(S^3 \times N)}$}$ and $\alpha_m\raisebox{-3pt}{$\Bigr|_{(S^3 \times N)}$}$ represent the same homotopy class in $\pi_3(M)$.
\end{lemA}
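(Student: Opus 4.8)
The plan is to do all computations inside $\pi_3(M) = \pi_3(S^3 \times S^2) = \pi_3(S^3) \oplus \pi_3(S^2) = \mathbb Z \oplus \mathbb Z$, where the two coordinates of a class are read off by composing a map $S^3 \to M$ with the two projections $p_1$ and $p_2$. Since $M$ is simply connected, free and based homotopy classes of maps $S^3 \to M$ coincide, so I may ignore all basepoint bookkeeping and work directly in the group $\pi_3(M)$. Under the inclusion $S^3 = S^3 \times N \hookrightarrow M$ the generator maps to $(1,0)$, and the Hurewicz homomorphism $h : \pi_3(M) \to H_3(M) = \mathbb Z$ is the projection $(a,b) \mapsto a$ onto the first coordinate, since the second summand dies under $\pi_3(S^2) \to H_3(S^2) = 0$.

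First I would pin down the first coordinate of $[\varphi|_{S^3 \times N}] = \varphi_*(1,0)$. By naturality of the Hurewicz homomorphism, $h(\varphi_*(1,0)) = \varphi_*(h(1,0)) = \varphi_*(1)$ in $H_3(M)$, where the right-hand $\varphi_*$ is the map on $H_3$; since $\varphi$ is positive this induced map is the identity, so the value is $1$. Hence $[\varphi|_{S^3 \times N}] = (1, b)$ for some integer $b$ determined by $\varphi$.

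Next I would compute $[\alpha_m|_{S^3 \times N}]$. From $\alpha_m(x,N) = (x, \mu_m(x)N)$ the first coordinate is $x \mapsto x$, of degree $1$, while the second coordinate is the orbit map $x \mapsto \mu_m(x)N = (\mathrm{ev}_N \circ \mu_m)(x)$, where $\mathrm{ev}_N : SO_3 \to S^2$, $g \mapsto gN$, is the bundle projection of $SO_2 \hookrightarrow SO_3 \to S^2 = SO_3/SO_2$. Its map on $\pi_3$ sits in the exact sequence $\pi_3(S^1) \to \pi_3(SO_3) \to \pi_3(S^2) \to \pi_2(S^1)$, i.e.\ $0 \to \mathbb Z \to \mathbb Z \to 0$, so $\mathrm{ev}_{N*}$ is an isomorphism; concretely, precomposing with the double cover $S^3 = SU_2 \to SO_3$ (an isomorphism on $\pi_3$) turns $\mathrm{ev}_N$ into the Hopf map $S^3 \to S^2$, a generator of $\pi_3(S^2)$. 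Choosing the generator of $\pi_3(S^2)$ so that $\mathrm{ev}_{N*}$ is $+1$, we obtain $[\alpha_m|_{S^3 \times N}] = (1, m)$; this simultaneously verifies the value $* = m$ promised in the proof of Lemma~\ref{lem:2}.

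Finally, given $\varphi$ with $[\varphi|_{S^3 \times N}] = (1, b)$, I would take $m = b$ (or $m = -b$ under the opposite choice of generator), so that $[\alpha_m|_{S^3 \times N}] = (1, b) = [\varphi|_{S^3 \times N}]$ and the two restrictions are homotopic in $M$, which is exactly the assertion. The only step requiring genuine care is the identification of $\mathrm{ev}_{N*}$ on $\pi_3$ as an isomorphism carrying a generator to a generator — equivalently, that the orbit map $SO_3 \to S^2$ becomes the Hopf map after passing to the universal cover; the Hurewicz computation of the first coordinate and the matching of $m$ are then formal.
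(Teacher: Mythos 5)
Your proof is correct, and its skeleton agrees with the paper's: both reduce Lemma~A to two component computations in $\pi_3(M)=\pi_3(S^3)\oplus\pi_3(S^2)$, where the essential point is that the second component $p\circ\alpha_m\bigl|_{S^3\times N} = \mathrm{ev}_N\circ\mu_m$ represents $m\in\pi_3(S^2)\cong\mathbb Z$. But you establish this key point by a genuinely different route. The paper argues geometrically: it identifies $\pi_3(S^2)$ with $\mathbb Z$ via the Hopf invariant and computes the Hopf invariant of $\alpha_1'=\mathrm{ev}_N\circ\mu_1$ directly --- the fibers $\mathrm{ev}_N^{-1}(N)$ and $\mathrm{ev}_N^{-1}(V)$ are cosets of $SO_2$ (diameters in the ball model $D^3_\pi$ of $SO_3$), their $\mu_1$-preimages are great circles in $S^3$ with linking number $1$ --- and then obtains $\alpha_m'$ from $\alpha_1'$ by precomposition with a degree~$m$ map. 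You instead invoke the homotopy exact sequence of the fibration $SO_2\hookrightarrow SO_3\overset{\mathrm{ev}_N}{\longrightarrow}S^2$, which gives $0\to\pi_3(SO_3)\to\pi_3(S^2)\to 0$, so $\mathrm{ev}_{N*}$ is an isomorphism, and you normalize it by recognizing $\mathrm{ev}_N$ precomposed with the quaternionic double cover $S^3\to SO_3$ as the Hopf map. This buys two things: first, surjectivity of $\mathrm{ev}_{N*}$ alone already yields the \emph{existence} of $m$ with $\mathrm{ev}_{N*}(m)=b$, which is all the subsequent Proposition actually needs from Lemma~A, so the sign/normalization discussion is optional for you, whereas the paper's linking computation pins down the concrete value (matching its definition of $m$ as the class of $p\circ\varphi\circ i_\varphi$, with which your $m=\mathrm{ev}_{N*}^{-1}(b)$ agrees); second, you make explicit, via naturality of the Hurewicz homomorphism, the step the paper leaves implicit --- that positivity of $\varphi$ forces the first coordinate of $\bigl[\varphi\bigl|_{S^3\times N}\bigr]$ to be $1$. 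Conversely, the paper's preimage-and-linking argument is self-contained at the level of regular values and avoids the quaternionic model of the Hopf fibration. Both proofs are complete and correct.
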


\begin{lemB}
Let $\varphi$ and $\psi$ be self-diffeomorphisms of $M$ such that the images of $S^3 \times N$ at $\varphi$ and $\psi$ represent the same element in $\pi_3(M)$.
Then for any framed-immersion $f:\ M \longrightarrow  \mathbb R^7$ the regular homotopy classes of $f \circ \varphi$ and $f \circ \psi$ coincide.
\end{lemB}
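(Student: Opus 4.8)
The plan is to reduce everything, via Remark~\ref{rema:1}, to the restrictions to $S^3 = S^3\times N$ and then to compare the two framed Gauss maps in $\pi_3(SO)$ by the chain rule, exactly as in the proof of Lemma~\ref{lem:2}. By Remark~\ref{rema:1} the regular homotopy class of a framed immersion $g\colon M\to\mathbb R^7$ is detected by the class $[dg|_{S^3}]\in\pi_3(SO)$ of its framed Gauss map restricted to $S^3\times N$. Applying this to $g=f\circ\varphi$ and writing $\iota_\varphi:=\varphi|_{S^3}\colon S^3\to M$, the chain rule together with the pointwise-product formula used in Lemma~\ref{lem:2} gives, in the group $\pi_3(SO)$,
\[
\reg[f\circ\varphi] \;=\; \bigl[\,d(f\circ\varphi)|_{S^3}\,\bigr]
\;=\; \bigl[\,df\circ\iota_\varphi\,\bigr] \;+\; \bigl[\,d\varphi|_{S^3}\,\bigr],
\]
and likewise for $\psi$. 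The first summand $[df\circ\iota_\varphi]$ is the class of the composite of the fixed map $df\colon M\to SO$ with $\iota_\varphi$, so it depends only on the homotopy class $[\iota_\varphi]=[\varphi|_{S^3}]\in\pi_3(M)$; by hypothesis $[\varphi|_{S^3}]=[\psi|_{S^3}]$, hence these first summands agree. It therefore suffices to prove that the second summand $[d\varphi|_{S^3}]$ likewise depends only on $[\varphi|_{S^3}]$.

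For the second summand I would analyse $d\varphi$ along $S^3\times N$ through the splitting $TM|_{S^3\times N}=TS^3\oplus\nu$, where $\nu$ is the rank $2$ normal bundle of $S^3\times N$ in $M$. Since $\varphi$ is a diffeomorphism carrying $S^3\times N$ onto $\iota_\varphi(S^3)$, the differential $d\varphi$ sends $TS^3$ into the tangent bundle of the image sphere; hence, with respect to the chosen parallelization $\tau$ of $M$ and a splitting of $TM$ along the image into tangential and normal parts, $d\varphi|_{S^3}$ is block upper-triangular,
\[
d\varphi|_{S^3}\;\simeq\;
\begin{pmatrix} A & * \\ 0 & B \end{pmatrix},
\]
and is therefore homotopic to the block-diagonal map $\mathrm{diag}(A,B)$. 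Here $B\colon S^3\to SO_2$ is the induced isomorphism of the rank $2$ (hence trivial) normal bundles, and since $\pi_3(SO_2)=\pi_3(S^1)=0$ its stable class vanishes. Consequently $[d\varphi|_{S^3}]=[A]$, where $A$ is the stable tangential Gauss map of the immersion $\iota_\varphi$.

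It remains to see that $[A]$ depends only on $[\iota_\varphi]\in\pi_3(M)$. I would express $[A]$ as the difference, in $\pi_3(SO)$, between the framing $\iota_\varphi^*\tau$ of $\iota_\varphi^*(TM\oplus\mathcal E^1)$ pulled back from $\tau$, and the framing assembled from the fixed quaternionic framing of $TS^3$ together with a framing of $\nu\oplus\mathcal E^1$. The pulled-back framing $\iota_\varphi^*\tau$ is manifestly invariant under homotopies of $\iota_\varphi$, while the framing of the rank $2$ bundle $\nu$ is unique up to homotopy because such framings form a torsor over $[S^3,SO_2]=0$; thus $[A]$, and with it $[d\varphi|_{S^3}]$, is a function of $[\iota_\varphi]$ alone. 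Running the same analysis for $\psi$ and invoking $[\varphi|_{S^3}]=[\psi|_{S^3}]$, both summands coincide, so $\reg[f\circ\varphi]=\reg[f\circ\psi]$, which is the Lemma. The delicate point, and the step I expect to need the most care, is precisely this last one: showing that the tangential Gauss class $[A]$ is controlled by the bare homotopy class of $\iota_\varphi$ rather than by its finer regular-homotopy class as an immersion, which is exactly where the vanishing of $\pi_3(SO_2)$ (both for the normal block $B$ and for the choice of normal framing) is indispensable.
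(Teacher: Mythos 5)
The first half of your argument is sound and is exactly the mechanism of the paper's proof of Lemma~\ref{lem:2}: by Remark~\ref{rema:1} the framed regular homotopy class is detected on $S^3\times N$, the chain rule together with the group structure of $SO_q$ (pointwise product of maps $S^3\to SO_q$ adds homotopy classes) gives
\[
\reg[f\circ\varphi]=(df)_*[\iota_\varphi]+\bigl[d\varphi|_{S^3}\bigr],
\]
and the first summand depends only on $[\iota_\varphi]\in\pi_3(M)$. Likewise your reduction of $[d\varphi|_{S^3}]$ to the tangential block $[A]$ is fine: the normal block $B$ dies because $[S^3,SO_2]=0$. But this only repackages the Lemma: the whole content of Lemma~B is now the claim that $[A]$ depends only on the homotopy class of $\iota_\varphi$, and your justification of that claim is where the proof breaks.

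The gap is in the tangential summand, not the normal one. Your assembled framing uses the bundle monomorphism $d\iota_\varphi\colon TS^3\to TM$, and this is a regular-homotopy (Smale--Hirsch) datum, not a homotopy datum: over a fixed homotopy class of maps $S^3\to M^5$, monomorphisms $TS^3\to TM$ are classified by $\pi_3$ of the Stiefel manifold $V_3(\mathbb R^5)=SO_5/SO_2$, and since $\pi_3(SO_2)=\pi_2(SO_2)=0$ this group is $\pi_3(SO_5)=\mathbb Z$ --- so the vanishing of $\pi_3(SO_2)$, which you invoke, kills only the normal block and leaves exactly a $\mathbb Z$ worth of ambiguity in $[A]$. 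Nor does embeddedness rescue the claim: your argument uses only that $\iota_\varphi$ is an embedded $3$-sphere with trivialized normal bundle, and for such spheres the claim is false. By the theorem of Hughes and Melvin on the Smale invariant of a knot, there are knotted embeddings $S^3\hookrightarrow\mathbb R^5$ with nonzero Smale invariant (this is the same phenomenon as the Brieskorn embeddings of homotopy $7$-spheres in $S^9$ recalled in the Introduction); connect-summing such a knot locally onto $S^3\times N$ inside a chart of $M$ yields an embedded sphere homotopic to the standard inclusion but with a different Gauss class $[A]$. Hence your argument proves too much and cannot be correct as it stands. What must enter --- and what your proof never uses --- is that $\iota_\varphi$ and $\iota_\psi$ extend to global diffeomorphisms of $M$. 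The paper feeds this in by an entirely different mechanism: it stabilizes to $\hat\varphi=\varphi\times\mathrm{id}_{D^q}$ and $\hat\psi=\psi\times\mathrm{id}_{D^q}$ on $M\times D^q$, where the two embedded $3$-spheres, being homotopic, become framed isotopic in the high-codimension setting, and then applies Thom's isotopy lemma to obtain an ambient diffeotopy $\Psi_t$ from $\hat\varphi$ to $\hat\psi$, concluding $d\hat\varphi\simeq d\hat\psi\colon M\to SO$. That ambient diffeotopy of the whole manifold is precisely the global input which your pointwise framing comparison along $S^3$ cannot see, and without which the tangential class $[A]$ is simply not determined by $[\iota_\varphi]$.
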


\begin{proof}[Proof of Lemma B]
Let us extend the self-diffeomorphisms $\varphi$ and $\psi$ to those of $M
\times D^q$ by taking the product with the identity map of $D^q$, for some
large $q$, and denote these self-diffeomorphisms of $M \times D^q$ by $\hat\varphi$ and
$\hat\psi$. Similarly we shall denote by $\hat f$ the product of $f$ with the
standard inclusion $D^q \subset \mathbb R^q.$

By the Smale--Hirsch theory \cite{S1,H} (or by the so-called Compression Theorem of Rourke--Sanderson \cite{R-S}) the restriction induces a bijection
\[
\FrImm(M, \mathbb R^7) \longleftarrow \FrImm(M \times D^q, \mathbb R^{7 + q}).
\]
Again the regular homotopy class of a framed immersion in
\[
\FrImm(M, \mathbb R^{7 + q}) = \FrImm(M \times D^q, \mathbb R^{7 + q})
\]
 is uniquely defined by the restriction to $S^3 (= S^3 \times N)$.

The maps $\hat\varphi$ and $\hat \psi$ restricted to the sphere $S^3 \times N$ are framed isotopic.
By Thom's isotopy lemma \cite{T} there is an isotopy $\Psi_t : \ M \times D^q \longrightarrow  M \times D^q$ such that $\Psi_0 = \hat\varphi$ and $\Psi_1 = \hat \psi$.

It follows that the induced maps $d\hat\varphi:\ M \longrightarrow  SO$ and $d\hat\psi :\ M \longrightarrow  SO$ are homotopic.
Hence, the framed-regular homotopy classes of $\hat f\circ \hat \varphi$ and $\hat f \circ \hat \psi$ coincide.
Then the compositions $f\circ \varphi$ and $f\circ \psi$ are also regularly homotopic.
\end{proof}

\begin{proof}[Proof of Lemma A]
Let $m$ be the homotopy class of the composition
\[
S^3 \overset{i_\varphi}{\hookrightarrow} S^3 \times S^2 \overset{p}{\longrightarrow} S^2,
\]
 where $i_\varphi$ is the inclusion $x\mapsto \varphi(x, N)$ and $p$ is the projection $S^3 \times S^2 \longrightarrow  S^2$.
We claim that the maps $\varphi'= p \circ \varphi\raisebox{-3pt}{$\Bigr|_{(S^3
    \times N)}$}$ and $\alpha_m' = p \circ
\alpha_m\raisebox{-3pt}{$\Bigr|_{(S^3 \times N)}$}$ are homotopic maps from
$S^3$ to  $ S^2$.
To show this it is enough to compute the Hopf invariants of these maps.

Let us consider first the case $m = 1$.
We need to show that the Hopf invariant of $\alpha_1'$ is equal to~$1$.

The map $\mu_1:\ S^3 \longrightarrow  SO_3$ representing the generator in
$\pi_3(SO_3)$ can be provided by  the standard double covering $S^3 \longrightarrow  SO_3$.
Then $\alpha_1$ is the self-diffeomorphism of $S^3 \times S^2$
\[
\alpha_1(x, y) = \bigl(x, \mu_1(x)y\bigr)
\]
and $\alpha_1'$ is the composition of the following three maps: the inclusion
\[
S^3 \hookrightarrow S^3 \times S^2,\quad x \longmapsto  (x, N);
\]
the map $\alpha_1$ and the projection $p:\ S^3 \times S^2 \longrightarrow  S^2$.

In order to compute the Hopf invariant of $\alpha_1':\ S^3 \longrightarrow  S^2$ first we need to compute the preimage of a regular value.
Let us compute first the preimage of $N$ in $S^3$, i.e. $(\alpha_1')^{-1}(N)$.
The map $\alpha_1'$ can be further decomposed as the composition of $\mu_1:\ S^3 \longrightarrow  SO(3)$ with the evaluation map $e:\ SO_3 \longrightarrow  S^2$, $g\mapsto g(N)$, for $g \in SO_3$.
The set $e^{-1}(N)$ is the subgroup $SO_2 \subset SO_3$, which consists of the rotations around the line $\overrightarrow{(N, -N)}$ (the stabilizer subgroup of $N$).

When we identify $SO_3$ with the ball $D_\pi^3$ of radius $\pi$ with identified antipodal points on the boundary $S_\pi^2$, then this subgroup $SO_2$ corresponds to the diameter $\overline{N, -N}$
with identified endpoints $N$ and $-N$.
The preimage of this diameter at $\mu_1:\ S^3 \longrightarrow  SO_3$ is a great circle.
If we take any other point $V$ in $S^2$, then $e^{-1}(V)$ is a coset of the previous subgroup $SO_2$.
Then its preimage at $\mu_1$ is also a great circle.
Therefore the linking number of two such preimages is~$1$.

The map $\alpha_m'$ can be obtained from $\alpha_1'$ by precomposing it with a degree $m$ map $S^3 \longrightarrow  S^3$.
Hence the Hopf invariant of $\alpha_m'$ is $m$.
\end{proof}

\section*{Parametrizations of the links $L_d$ (or, equivalently, of the singularities $X_d$)}
Let us denote by $\zeta$ the complex $\mathbb C^2$-bundle $T\mathbb CP^1 \oplus
\varepsilon^1_C$ over $\mathbb CP^1 = S^2$, where $T\mathbb CP^1$ is the tangent
bundle of $\mathbb CP^1$, and $\varepsilon^1_C$ is the trivial complex line
bundle. Note that the bundle  $\zeta$ considered as a real $\mathbb
R^4$-bundle is isomorphic to the trivial bundle. Hence its total space is
diffeomorphic to $S^2\times \mathbb R^4 .$ Let us denote by $E_0(\zeta)$ the
complement of the zero section in the total space of the bundle $\zeta.$
We shall give below a {\it {diffeomorphism}} of this  space $E_0(\zeta)$ onto $X_d \setminus 0.$
The existence of such a diffeomorphism will give a new proof of the
result of [KN] about the diffeomorphism type of $L_d.$
\begin{pro}
$L_d$ is diffeomorphic to $S^3\times S^2.$
\end{pro}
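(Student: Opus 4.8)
The plan is to produce an explicit diffeomorphism $\Phi \colon E_0(\zeta) \to X_d \setminus 0$ and then read off the link from the bundle picture, using the two facts recorded just above the statement: that $\zeta$ is trivial as a real $\mathbb R^4$-bundle (so its total space is $S^2\times\mathbb R^4$), and hence that $E_0(\zeta)=S^2\times(\mathbb R^4\setminus 0)$. Granting $\Phi$, the concluding step is purely formal: $X_d$ is weighted homogeneous for the $\mathbb R_{>0}$-action $r\cdot(x,y,z,v)=(r^dx,r^dy,r^dz,rv)$, whose weighted Euler vector field is transverse to the small spheres, so Milnor's conical structure theorem gives $X_d\setminus 0\cong L_d\times\mathbb R_{>0}$; on the other side the fibrewise scaling gives $E_0(\zeta)\cong S(\zeta)\times\mathbb R_{>0}$ with $S(\zeta)=S^2\times S^3$. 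Choosing $\Phi$ to intertwine these two $\mathbb R_{>0}$-actions (or, failing that, comparing the ends of the two product manifolds) identifies $L_d$ with the sphere bundle $S(\zeta)=S^2\times S^3$.

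The geometric heart is the construction of $\Phi$, for which I would use the Veronese parametrization of the quadric cone. The map $\nu(s,t)=(s^2-t^2,\,i(s^2+t^2),\,2st)$ satisfies $\nu_1^2+\nu_2^2+\nu_3^2\equiv 0$ and exhibits $\{x^2+y^2+z^2=0\}\setminus 0$ as the complement of the zero section of $T\mathbb{CP}^1=\mathcal O(2)$ (the double cover $(s,t)\mapsto\nu(s,t)$ collapses exactly the antipodal $\mathbb Z_2$). Its differential $w:=D\nu\cdot(a,b)$ is tangent to the cone, $\sum_j\nu_jw_j=0$, and a direct computation gives the clean identity
\[
w_1^2+w_2^2+w_3^2=4\,(ta-sb)^2 .
\]
Thus, writing a point of $X_d$ as $(x,y,z)=P\,\nu(s,t)+Q\,w$ together with the fourth coordinate $v$, one gets $x^2+y^2+z^2=4Q^2(ta-sb)^2$, and the defining relation $x^2+y^2+z^2=-v^{2d}$ collapses to a single scalar equation tying $v^d$ to $Q\,(ta-sb)$. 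The two summands of $\zeta$ now acquire a transparent meaning: the $T\mathbb{CP}^1$-factor supplies the tangent direction $w$ (hence the coefficient $Q$, and with it $v^d$), while the trivial summand $\varepsilon^1_C$ supplies the radial coefficient $P$ along the cone. Removing the zero section $(P,Q)=(0,0)$ corresponds exactly to removing $0\in X_d$.

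The step I expect to be the main obstacle is upgrading this local recipe to a genuine global diffeomorphism and, in particular, accounting for the exponent $2d$. The relation only pins down $v^d$, so $v$ itself must be recovered through a $d$-fold device; the real content of the construction is to arrange the dependence on $v$ so that the resulting map is single-valued, smooth, bijective, and of maximal rank across both sheets of the Veronese cover and over all of $\mathbb{CP}^1$ at once. Concretely I would fix unit representatives $(s,t)\in S^3$ (so that $ta-sb$ reduces to the tangent coordinate), write $\Phi$ in closed form, and then check regularity by computing its differential and bijectivity by exhibiting an explicit inverse on $X_d\setminus 0$. The fact that this succeeds for every $d$ --- the $d$-dependence being absorbed into a reparametrization of the radial $\mathbb C^*$-direction that is a diffeomorphism away from the origin --- is precisely what forces the diffeomorphism type of $L_d$ to be independent of $d$, in agreement with \cite{K-N}.
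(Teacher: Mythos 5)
Your Veronese algebra is correct (the identity $w_1^2+w_2^2+w_3^2=4(ta-sb)^2$ does hold), but the proposal stands or falls on exactly the point you defer, and as you have set it up it genuinely fails. With fiber coordinates $(P,Q)$ on $\zeta$, the defining equation determines only $v^d$ via $4Q^2(ta-sb)^2=-v^{2d}$, and no continuous single-valued ``$d$-fold device'' recovers $v$: a $d$-th root cannot be chosen continuously on $\mathbb C\setminus 0$, so the would-be map $E_0(\zeta)\to X_d\setminus 0$ is $d$-valued for $d\ge 2$ (equivalently, forgetting $v$ in favor of $v^d$ is $d$-to-one, contradicting bijectivity). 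The cure is to swap which fiber coordinate carries $v$: the trivial summand $\varepsilon^1_C$ should be the $v$-line itself, with the equation then determining the remaining coefficient polynomially in $v^d$, so that no root extraction ever occurs --- and this is precisely how the paper proceeds. After the linear change of coordinates $X_d=\{xy-z(z+v^d)=0\}$ it parametrizes $X_d\setminus 0$ by two explicit charts, $\Psi(t,x,v)=(x,\,t^2x+tv^d,\,tx,\,v)$ and $\Psi(t',y,v)=(t'^2y-t'v^d,\,y,\,t'y-v^d,\,v)$, and identifies the resulting bundle as $\zeta=T\mathbb CP^1\oplus\varepsilon^1_C$ by exhibiting (after an explicit diffeotopy) the transition map $t\mapsto t^2$ on the equator. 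This normal form also eliminates the extra bookkeeping your route requires (the antipodal $\mathbb Z_2$ of the Veronese cover and the two tangent planes through a generic point, i.e.\ the sign ambiguity in $2Q(ta-sb)=\pm iv^d$).

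The concluding step also hides a gap in your fallback option. Your primary route --- building $\Phi$ equivariantly with respect to the weight-$(d,d,d,1)$ action --- would be legitimate, and in fact the paper's explicit $\Psi$ happens to intertwine $(t,x,v)\mapsto(t,r^dx,rv)$ with that action, though the paper never uses this. But ``comparing the ends of the two product manifolds'' does not work: a diffeomorphism $L_d\times\mathbb R\cong(S^3\times S^2)\times\mathbb R$ that fails to respect the product structures yields, from the ends, only a (proper) homotopy equivalence of the cross-sections, and in dimension $5$ homotopy equivalence does not formally upgrade to diffeomorphism. The paper bridges exactly this point by citing Barden's classification of simply connected $5$-manifolds (\cite{Ba}, Theorem 2.2): two such manifolds whose products with the real line are diffeomorphic are themselves diffeomorphic. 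Since $L_d$ and $S^3\times S^2$ are simply connected, this is all that is needed, but it is a genuinely deep external input, not the ``purely formal'' step your proposal claims; without either verified equivariance or Barden's theorem, your argument does not close.
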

\begin{proof}
$X_d\setminus 0$ is diffeomorphic to $\L_d \times \mathbb R^1,$
and the space $E_0(\zeta)$ is diffeomorphic to  $S^3 \times S^2 \times \mathbb
R^1.$ For simply connected $5$-manifolds it is well-known, that two such
manifolds  are
diffeomorphic if their products with the real line are diffeomorphic (see
\cite {Ba}, Theorem 2.2).
Hence $L_d$ and $S^3 \times S^2$ are diffeomorphic.
\end{proof}
Next we give a concrete parametrization:
\[
\aligned
&\varphi_d :
\  E_0(\zeta) \longrightarrow X_d \setminus 0 \\
&= \bigl\{x, y, z, v\ \big|\  x^2 + y^2 + z^2 + v^{2d} = 0, \ |x| + |y| + |z| + |v| \neq 0 \bigr\}.
\endaligned
\]
The composition $i_d \circ \varphi_d$ (or its restriction to $\varphi_d^{-1}(S^7)$) will give a framed-immersion $S^3 \times S^2 \longrightarrow  S^7$, and its regular homotopy class $\reg[i_d \circ \varphi_d]$ will turn out to be the number $d \in \mathbb Z = \FrImm(S^3 \times S^2, S^7)$.

This will imply that the image-regular homotopy class of the link $L_d$ in $S^7$ is $d\mod 2$ in $\mathbb Z_2 = I(S^3 \times S^2, S^7)$.

\begin{proof}
For arbitrary manifolds $N$ and $Q$ the natural map
\[
\FrImm(N, Q) \longrightarrow  \FrImm(N, Q \times \mathbb R^1)
\]
 induces a bijection -- by the Smale--Hirsch immersion theory (or by the Compression Theorem of Rourke--Sanderson).
Hence $\FrImm(X_d \setminus 0 \subset \mathbb C^4 \setminus 0) = \FrImm (S^3 \times S^2 \subset S^7)$.
By a coordinate transformation of $\mathbb C^4$ we obtain the following equivalent equation defining $X_d$
\[
X_d = \bigl\{ x, y, z, v\ \big|\ xy - z(z + v^d) = 0\bigr\}.
\]

The parametrization of $X_d \setminus 0$ is the following.

The inclusion
\[
 E_0(\zeta) \overset{\Psi}{\longrightarrow}
\mathbb C^4 = \{(x, y, z, v) \mid  x, y, z, v \in \mathbb C\} \text{ with image } \ \im \Psi = X_d \setminus 0
\]
will be described on two charts:

1) $\bigl((a : b), x, v\bigr)$, where $a, b, x, v \in \mathbb C$, $b \neq 0$,
$(a:b) \in \mathbb  CP^1$, and\break
$\|x\| + \|v\| \neq 0$.
Put $t = \frac{a}{b} \in \mathbb C$.

The map $\Psi$ on this chart will be given by the formula
\[
\Psi:\ (t, x, v) \longrightarrow (x, t^2 x + tv^d , tx, v).
\]

2) For $a \neq 0$ denote the quotient $\frac{b}{a}$ by $t'$.
On the  part of $E_0(\zeta)$ that projects to $\mathbb CP^1 \setminus (1:0)$
(that is diffeomorphic to $\mathbb CP^1 \setminus (1:0) \times (\mathbb C^2 \setminus 0))$ consider the coordinates $(t', y, v)$ and define $\Psi$ by the formula
\[
\Psi:\ (t', y, v) \longrightarrow (t^{\prime 2} y - t'v^d, y, t'y - v^d, v).
\]
The change of coordinates between the two coordinate charts of $E_0(\zeta)$ is
\[
\aligned
t'= t^{-1}, \ v = v, \ x &= t^{\prime 2} y - t'v^d \ \text{ or equivalently}\\
y &= t^2 x + tv^d . \
\endaligned
\]
In order to see that these local coordinates give indeed the bundle $\zeta$
over $\mathbb CP^1$ we can precompose the first local system with the map
$(t, x,v) \mapsto (t, x-tv^d,v).$ (Note that this map can be connected to the
identity by the diffeotopy $(t,x,v) \mapsto (t, x-stv^d, v)$,\ $0 \le s \le 1.)$ Then the change from the first coordinate
 system to
the second one for $t \in S^1$ on the equator of $S^2 = \mathbb CP^1$ will be
given by the map $(t,x,v) \mapsto (t, t^2x,v)$, where $x,v \in \mathbb C.$
Now it is clear that the obtained bundle is $\zeta = T\mathbb CP^1 \oplus \varepsilon^1_C.$
(The  map of the equator to $U(2)$ defining the bundle $\zeta$ gives in $\pi_1(U(2))$ the double of the
generator, and its image in $\pi_1(SO(4) = Z_2$  is trivial. That is why the
bundle $\zeta$ is trivial as a real bundle although it has first Chern class
equal $2$ as a complex bundle.)
Note that $\Psi$ maps the part of the first chart corresponding to the points
$t=0$, (i.e. the space  $\mathbb C^2 = \{(0:1), x,v\}$) identically onto the coordinate
space $\mathbb C^2_{x,v} =\{x,0,0,v\}$ of $\mathbb C^4.$
The restriction of $\Psi$ to this part determines the framed immersion of
$X_d\setminus 0$ to $\mathbb C^4.$ Hence, the immersion itself is very
simple: just the inclusion of $\mathbb C^2\setminus 0 \to \mathbb C^4.$ But we need  to
consider also the framing. It is coming
a) from the paramatrization $\Psi$ and b) from the defining equation of $X_d.$
\ \

a) The parametrization gives the complex vector field
\[
\frac{\partial \Psi}{\partial t} \raisebox{-4pt}{$\Bigr|_{t = 0}$} = (0, v^d, x, 0).
\]
b) The defining equation
$g(x,y,z,v) = xy - z(z + v^d) = 0$ at the points $(x,0,0,v)$
gives the complex vector field
\[
\text{\rm grad}\, g (x,0,0,v) = (0, x, - v^d, 0).
\]

These two complex vector fields have zero first and last complex coordinates
(on the coordinate subspace $\mathbb C^2_{x,v} =\{x,0,0,v\}$).
Hence, we shall write only their second and third coordinates: those are
$(v^d, x)$ and $(-x,v^d)$ respectively. These two complex vectors
 give four real vector fields if we add their $i$-images as well.
Let us denote by $a_1$ and $a_2$  the real and imaginary coordinates of $v^d$: $v^d = a_1 + ia_2$.
Similarly $x_1$ and $x_2$ are those of $x$, i.e. $x = x_1 + i x_2$.
Then the four real vectors in $\mathbb R^4 =\mathbb C^2 =
(0,y,z,0)$ are:
\begin{align*}
\bold u_1 &= (a_1, a_2,  x_1, x_2) \ \text{ }\\
\bold u_2 &= (a_2, -a_1, x_2, -x_1) \ \text{ }\\
\bold u_3 &= (x_1, x_2, - a_1, -a_2) \ \text{ }\\
\bold u_4 &= (-x_2, x_1, a_2, -a_1).
\end{align*}

The map
$
(x, v) \in \mathbb R^4\setminus 0 \longrightarrow  (\bold u_1, \bold u_2, \bold u_3, \bold u_4)
$
can be decomposed as a degree $d$ branched covering $(x,v) \mapsto (x,v^d)$
and a map representing an element in $\pi_3(SO_4) = \pi_3(S^3) \oplus \pi_3(SO_3)$
of the form $(1, *)$ for some unknown element $*$ in  $\pi_3(SO_3).$
(This is because the map $(x,v^d)= (x_1,x_2, a_1, a_2) \mapsto \bold u_1 =
(a_1, a_2, x_1, x_2)$ is almost the identity,
it differs only by an even permutation of the coordinates.)
Hence the composition represents an element of the form $(d, ?)\in \pi_3(S^3)
\oplus \pi_3(SO_3)$, and its image
in $\pi_3(SO)/j_{4*}(\pi_3(SO_3) = \mathbb Z_2$ is $d \,\text{\rm mod}\,2$, see
Remark 2.
That finishes the proof of Theorem 3.
\end{proof}

\section*{Appendix}

For any space $Y$ let us denote by $CY$ the cone over $Y$.
Here we show that the map provided by the Puppe sequence
\[
\alpha:\ \bigl[C(\overset{\circ}{M}) \cup C(sk_2 M), SO\bigr] \longrightarrow  \bigl[ \overset{\circ}{M} \cup C(sk_2 M), SO\bigr]
\]
can be identified with the coboundary map in the cochain complex:
\[
\delta:\ C^2(M; \mathbb Z) \longrightarrow  C^3 (M; \mathbb Z).
\]
We have seen that the sources and targets of $\delta$ and $\alpha$ can be identified.

For simplicity let us consider the situation when $sk_2 M = S^2$ and $M$ has a single $3$-cell $D^3$, attached to this $S^2$ by a map $\theta$ of degree~$k$.
Then $\overset{\circ}{M} = S^2 \underset{\theta}{\cup} D^3$.

Let us denote the sets
\[
\overset{\circ}{M} \cup C(sk_2 M) \ \text{ and } \ C\overset{\circ}{M} \cup C(sk_2 M)
\]
by $A$ and $B$ respectively.

Clearly we can choose \emph{any} degree $k$ map for $\theta$ in order to study the induced map~$\alpha$.
Take for $\theta$ a branched $k$-fold cover of $S^2$ along  $S^0.$
Then the inclusion $A \subset B$ can be described homotopically as follows:

In $S^3 \times [0, 1]$ contract an interval $*\times [0,1]$ for some $* \in
S^3$ to a point. $A$ will be identified with $S^3\times \{0\}.$
Further on $S^3 \times \{1\}$ identify the points that are mapped into the
same point by the suspension of $\theta$.
The part of $B$ coming from $S^3 \times \{1\}$ will be denoted by $B_1$.
The space $B_1$ is a deformation retract of~$B$.

Let us denote by $r$ the retraction $B \longrightarrow  B_1$.
Clearly, its restriction  $r|_A: \ A \longrightarrow  B_1$ is a degree
$k$ map (it is actually the suspension of the  branched covering~$\theta$).
So the inclusion $A \subset B$ induces in the $3$-dimensional homology group $H_3$ (or in $\pi_3$) a multiplication by~$k$.

The proof of the special case (when in $M$ there is a single $2$-cell and a single $3$-cell) is finished.
The general case follows easily taking first the quotient of $sk_2\, M$ by all but one $2$-cell and considering any single $3$-cell.

\footnotesize

\small
\bigskip
\noindent
Authors addresses:\\
Atsuko Katanaga\\
School of General Education, Shinshu University\\
3-1-1 Asahi, Matsumoto-shi\\
Nagano 390-8621, Japan\\
katanaga@shinshu-u.ac.jp\\
\\
Andr\'as N\'emethi\\
Alfr\'ed R\'enyi Mathematical Institute\\
Hungarian Academy of Sciences\\
Re\'altanoda u. 13-15\\
H-1053 Budapest, Hungary\\
nemethi@renyi.hu\\
\\
Andr\'as Sz\H{u}cs\\
Department of Analysis, E\"otv\"os University\\
P\'azm\'any P. s\'et\'any I/C\\
H-1117 Budapest, Hungary\\
szucs@cs.elte.hu

\end{document}